\pdfoutput=1
\RequirePackage{ifpdf}
\ifpdf 
\documentclass[pdftex]{sigma}
\else
\documentclass{sigma}
\fi

\usepackage{combelow}

\numberwithin{equation}{section}

\newtheorem{Theorem}{Theorem}[section]
\newtheorem*{Theorem*}{Theorem}

\newtheorem{Lemma}[Theorem]{Lemma}
\newtheorem{Proposition}[Theorem]{Proposition}
 { \theoremstyle{definition}
\newtheorem{Definition}[Theorem]{Definition}

\newtheorem{Example}[Theorem]{Example}

\newtheorem{Question}[Theorem]{Question}
 }

\def\bR{{\mathbb{R}}}

\newcommand{\calL}{{\mathcal L}}
\newcommand{\calM}{{\mathcal M}}
\newcommand{\calN}{{\mathcal N}}

\newcommand{\sbv}[2]{{\{{{#1},{#2}}\}}}
\newcommand{\courant}[2]{{[{{#1},{#2}}]_D}}

\newcommand{\liebra}[2]{{[{{#1},{#2}}]_{TM \oplus \bR}}}

\newcommand{\rhoe}{\rho_E}
\newcommand{\rhoa}{{\rho_A}}

\newcommand{\rhott}{\rho_{T \oplus T^*}}

\newcommand{\rhotr}{\rho_{TM \oplus \bR}}

\newcommand{\bracket}[2]{\langle #1,\,#2\rangle}

\newcommand{\inner}[2]{{({{#1},{#2}})}}

\newcommand{\omegac}{{\omega_{can}}}
\newcommand{\omegam}{{\omega_{\calM}}}

\newcommand{\Thetam}{{\Theta_{\calM}}}
\newcommand{\Thetan}{{\Theta_{\calN}}}

\newcommand{\rd}{\mathrm{d}}

\newcommand{\uu}{{\underline{u}}{}}
\newcommand{\uv}{{\underline{v}}{}}

\newcommand{\ualpha}{{\underline{\alpha}}{}}
\newcommand{\ubeta}{{\underline{\beta}}{}}

\newcommand{\ua}{{\underline{a}}{}}
\newcommand{\ub}{{\underline{b}}{}}
\newcommand{\umu}{\underline{\mu}}
\newcommand{\unablamu}{\underline{\nabla \mu}}

\newcommand{\baS}{{{}^A S}}

\newcommand{\wpi}{{\widetilde{\pi}}}

\newcommand{\bbT}{{\mathbb{T}}}

\newcommand{\nablabas}{{{}^A \nabla}^{\mathrm{bas}}}
\newcommand{\anabla}{{{}^A \nabla}}

\begin{document}

\newcommand{\arXivNumber}{2405.03533}

\renewcommand{\PaperNumber}{003}

\FirstPageHeading

\ShortArticleName{Comomentum Sections and Poisson Maps in Hamiltonian Lie Algebroids}

\ArticleName{Comomentum Sections and Poisson Maps\\ in Hamiltonian Lie Algebroids}

\Author{Yuji HIROTA~$^{\rm a}$ and Noriaki IKEDA~$^{\rm b}$}

\AuthorNameForHeading{Y.~Hirota and N.~Ikeda}

\Address{$^{\rm a)}$~Division of Integrated Science, Azabu University, Sagamihara, Kanagawa 252-5201, Japan}
\EmailD{\href{mailto:hirota@azabu-u.ac.jp}{hirota@azabu-u.ac.jp}}

\Address{$^{\rm b)}$~Department of Mathematical Sciences, Ritsumeikan University, \\
\hphantom{$^{\rm b)}$}~Kusatsu, Shiga 525-8577, Japan}
\EmailD{\href{mailto:nikeda@se.ritsumei.ac.jp}{nikeda@se.ritsumei.ac.jp}}
\URLaddressD{\url{http://www.ritsumei.ac.jp/~nikeda/}}

\ArticleDates{Received June 16, 2024, in final form December 29, 2024; Published online January 05, 2025}

\Abstract{In a Hamiltonian Lie algebroid over a pre-symplectic manifold and over a Poisson manifold, we introduce a map corresponding to a comomentum map, called a comomentum section. We show that the comomentum section gives a Lie algebroid morphism among Lie algebroids. Moreover, we prove that a momentum section on a Hamiltonian Lie algebroid is a~Poisson map between proper Poisson manifolds, which is a generalization that a momentum map is a Poisson map between the symplectic manifold to dual of the Lie algebra. Finally, a momentum section is reinterpreted as a Dirac morphism on Dirac structures.}

\Keywords{Poisson geometry; momentum maps; Poisson maps; Dirac structures}

\Classification{53D17; 53D20; 53D05}

\section{Introduction}

A momentum map is a fundamental object in symplectic geometry
defined on a symplectic manifold with a Lie group action.
Then the action is a Hamiltonian action and the total space is called a Hamiltonian $G$-space.

Analyses of physical models suggest that Lie group actions in momentum maps should be generalized to `Lie groupoid actions' to realize symmetries and conserved quantities in physical theories \cite{Alekseev:2004np, Blohmann:2010jd, Cattaneo:2000iw}.
A `groupoid' generalization of a Lie algebra is a Lie algebroid.
A Lie algebroid is an infinitesimal object of a Lie groupoid analogous to the way that a Lie algebra is an infinitesimal object of a Lie group.

Recently, a generalization of a momentum map and a Hamiltonian $G$-space
over a pre-symplectic manifold
has been proposed in a Lie algebroid (Lie groupoid) setting \cite{Blohmann:2018}.
It is inspired by the analysis of the Hamiltonian formalism of general relativity \cite{Blohmann:2010jd} and compatibility of physical models with Lie algebroid structures \cite{Kotov:2016lpx}.

Mathematically, the idea is natural in the following sense. A momentum map $\mu$ is a map from a smooth manifold $M$
to dual of a Lie algebra $\mathfrak{g}^*$,
$\mu\colon M \rightarrow \mathfrak{g}^*$.
It is also regarded as a section of a trivial vector bundle $M \times \mathfrak{g}^*$.
A momentum map on the trivial bundle has been generalized
to a section on the dual of a vector bundle $A$, $\mu \in \Gamma(A^*)$,
satisfying certain consistency conditions.
The generalization $\mu \in \Gamma(A^*)$ is called a momentum section,
and the Hamiltonian $G$-space is generalized
to a Hamiltonian Lie algebroid~\cite{Blohmann:2018}.

Further generalizations have been analyzed.
A momentum section and a Hamiltonian Lie algebroid over a Poisson manifold has also been proposed in \cite{Blohmann:2023}.
A momentum section has been generalized to a momentum section on a Courant algebroid \cite{Ikeda:2021fjk}, over a pre-multisymplectic mani\-fold \cite{Hirota:2021isx}, over bundle-valued (multi)symplectic structures \cite{Hirota:2023xqd} and over a Dirac structure~\cite{Ikeda:2023pdr}.
In our paper, based on two papers \cite{Blohmann:2023,Blohmann:2018},
we consider both momentum sections over \mbox{a~(pre-)symplectic} manifold and over a~Poisson manifold.

Momentum maps have several important and elegant properties.
If there exists a momentum map on a symplectic or a Poisson manifold, we have symplectic \cite{MarsdenWeinstein, Meyer} or Poisson reductions~\cite{MarsdenRatiu}.
One essential property to make reductions consistent is that the momentum map is a~Poisson map from $M$ to $\mathfrak{g}^*$.
In other words, comomentum map $\mu^*\colon \mathfrak{g} \rightarrow C^{\infty}(M)$
is a Lie algebra morphism.
However, a momentum section is not necessarily a Poisson map from $M$ to~$A^*$~\cite{Blohmann:2023}.
Our motivation is to improve this problem to consider reductions for the Hamiltonian Lie algebroid setting.
One idea to make a momentum section a Poisson map is to impose a condition compatible with a~Poisson structure and a Lie algebroid $A$ \cite{Hirota:2023xqd}.
We propose another idea to construct a Poisson map in a Hamiltonian Lie algebroid.

In this paper, we generalize two properties of momentum maps to momentum sections. One is a comomentum map and another is the momentum map as a Poisson map.

For a momentum map $\mu$, we can define a \textit{comomentum map}
$\mu^*\colon \mathfrak{g} \rightarrow C^{\infty}(M)$ as a dual map,
which has properties induced from the momentum map.
In particular, a comomentum map has the following property.

\begin{Proposition}
The comomentum map is a Lie algebra morphism from $\mathfrak{g}$ to $C^{\infty}(M)$, where a Lie algebra structure on $C^{\infty}(M)$ is defined by the Poisson bracket.
\end{Proposition}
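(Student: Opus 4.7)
The plan is to unwind the definition of the comomentum map and use the two defining features of the momentum map $\mu$: the Hamiltonian condition $\iota_{X_\xi}\omega = \mathrm{d}\langle\mu,\xi\rangle$ and equivariance of $\mu$ with respect to the coadjoint action.

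First I would set $\mu^*(\xi) := \langle\mu,\xi\rangle \in C^\infty(M)$ for each $\xi\in\mathfrak{g}$, and observe that linearity in $\xi$ is immediate, so only the bracket compatibility $\{\mu^*(\xi),\mu^*(\eta)\} = \mu^*([\xi,\eta])$ requires argument. The Hamiltonian condition says precisely that $X_\xi$, the fundamental vector field of $\xi$, is the Hamiltonian vector field of the function $\mu^*(\xi)$. Therefore
\[
\{\mu^*(\xi),\mu^*(\eta)\} \;=\; \omega(X_\eta,X_\xi) \;=\; -\iota_{X_\xi}\iota_{X_\eta}\omega.
\]

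Next I would compute this expression using equivariance. Infinitesimal equivariance of $\mu\colon M\to\mathfrak{g}^*$ under the coadjoint action is equivalent to $X_\xi(\mu^*(\eta)) = \mu^*([\xi,\eta])$ for all $\xi,\eta\in\mathfrak{g}$. Combining this with the previous display gives
\[
\{\mu^*(\xi),\mu^*(\eta)\} \;=\; -\iota_{X_\xi}\mathrm{d}\mu^*(\eta) \;=\; -X_\xi\bigl(\mu^*(\eta)\bigr) \;=\; \mu^*([\xi,\eta]),
\]
up to a fixed sign convention, which is the desired Lie algebra morphism property.

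The computation is essentially routine; the only real subtlety is the bookkeeping of signs, which depends on whether one takes $\xi\mapsto X_\xi$ to be a Lie algebra morphism or anti-morphism and on the sign convention in the definition of the Poisson bracket $\{f,g\}=\omega(X_f,X_g)$ versus $\omega(X_g,X_f)$. I would fix these conventions at the outset and verify that they propagate consistently through the three displayed equalities. No deeper obstruction is expected, since the statement is the classical fact whose Lie algebroid generalization motivates the rest of the paper.
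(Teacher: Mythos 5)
The paper states this Proposition in the introduction as a classical motivating fact and gives no proof of it, so there is nothing to compare against; your argument is the standard one and is correct. Note only that in your first display the identity should read $\omega(X_\eta,X_\xi)=+\iota_{X_\xi}\iota_{X_\eta}\omega$ (since $\iota_{X_\xi}\iota_{X_\eta}\omega=(\iota_{X_\eta}\omega)(X_\xi)$), a sign slip that is absorbed by the convention-dependence you already flag and does not affect the substance of the proof.
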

Here, for two Lie algebras $\mathfrak{g}_1$ and $\mathfrak{g}_2$,
a Lie algebra morphism is a map $\phi\colon\mathfrak{g}_1 \rightarrow \mathfrak{g}_2$ satisfying~${\phi([e_1, e_2]_1) = [\phi(e_1), \phi(e_2)]_2}$
for every $e_1, e_2 \in \mathfrak{g}_1$.
A Lie algebra structure on $C^{\infty}(M)$ is given by the Poisson bracket.

We have the following natural question.

\begin{Question}
Construct a comomentum section such that it is a Lie algebroid morphism
between suitable two Lie algebroids.
\end{Question}

In this paper, we define and analyze a bracket-compatible \textit{comomentum section} $\mu^*\colon\Gamma(A) \rightarrow C^{\infty}(M)$ corresponding to a bracket-compatible momentum section $\mu$.
It is proved that the comomentum section $\mu^*$ is a \textit{Lie algebroid morphism} from $A$ to a proper space including $C^{\infty}(M)$.

\begin{Definition}
Assume that $(A_1, [-,-]_1, \rhoa_1)$ and $(A_2, [-,-]_2, \rhoa_2)$ are
two Lie algebroids over $M$.
A \textit{Lie algebroid morphism} between two Lie algebroids $A_1$ and $A_2$
is a vector bundle morphism $\phi\colon A_1 \rightarrow A_2$
such that
\begin{gather*}
\phi([e_1, e_2]_1) = [\phi(e_1), \phi(e_2)]_2,\nonumber
\\
\rhoa_2 \circ \phi = \rhoa_1
\end{gather*}
for $e_1, e_2 \in \Gamma(A_1)$.
\end{Definition}

The idea on a pre-symplectic manifold
is that we consider a pair given by the anchor map $\rho$
of the Lie algebroid $A$ and comomentum section $\mu^*$.
Since $\rho$ is a map from $A$ to $TM$,
$\rho + \mu^*$ is regarded as a map from $A$ to $TM \oplus \bR$.
On a Poisson manifold, $\rho$ is replaced to
$(\nabla \mu)^*\colon A \rightarrow T^*M$.
Then $(\nabla \mu)^* + \mu^*$ is a Lie algebroid morphism
from $A$ to $T^*M \oplus \bR$ if we define a Lie algebroid structure on
$T^*M \oplus \bR$.

Another important property of a momentum map is that it is a Poisson map
between two Poisson manifolds $M$ and $\mathfrak{g}^*$.

The dual of a Lie algebra $\mathfrak{g}^*$ has the so called
Kirillov--Kostant--Souriau (KKS) Poisson structure
$\pi_{\rm KKS} \in \wedge^2(T\mathfrak{g}^*)$ \cite{Kirillov}.
A momentum map $\mu$ is a map between two Poisson manifolds~$(M, \pi)$ and $(\mathfrak{g}^*, \pi_{\rm KKS})$ satisfying the following property.
\begin{Proposition}\label{MMPM}
A momentum map $\mu\colon M \rightarrow \mathfrak{g}^*$ is a Poisson map.
\end{Proposition}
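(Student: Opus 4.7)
The plan is to reduce the Poisson-map condition to a check on linear functions on $\mathfrak{g}^*$ and then to identify both sides with the same pulled-back component of $\mu$. Since the Poisson bracket at a point $\alpha \in \mathfrak{g}^*$ depends only on the differentials of its arguments at $\alpha$, and since the differentials of linear functions $\xi \in \mathfrak{g} \subset C^{\infty}(\mathfrak{g}^*)$ span $T^*_\alpha \mathfrak{g}^*$ at every $\alpha$, it suffices to verify
\[
\{\xi \circ \mu,\, \eta \circ \mu\}_M \;=\; \{\xi,\eta\}_{\mathrm{KKS}} \circ \mu
\]
for all $\xi,\eta \in \mathfrak{g}$. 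This reduction is the first step.

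Next I would compute each side separately. Viewed as a linear function on $\mathfrak{g}^*$, the pullback is $\xi \circ \mu = \mu_\xi$, where $\mu_\xi(m) := \langle \mu(m), \xi\rangle$; equivalently, $\mu_\xi = \mu^*(\xi)$ is the value of the comomentum map. The defining property $\iota_{\xi_M}\omega = \rd \mu_\xi$ of the momentum map says that $\mu_\xi$ is a Hamiltonian function for the fundamental vector field $\xi_M$, so the Poisson bracket of two such pullbacks is
\[
\{\mu_\xi,\mu_\eta\}_M \;=\; \mu_{[\xi,\eta]},
\]
which is exactly the Lie algebra morphism property of the comomentum map provided by the preceding Proposition. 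For the right-hand side, the KKS bracket is characterized on linear functions by $\{\xi,\eta\}_{\mathrm{KKS}}(\alpha)=\langle \alpha,[\xi,\eta]\rangle$, hence $\{\xi,\eta\}_{\mathrm{KKS}} \circ \mu = \mu_{[\xi,\eta]}$. Comparing the two expressions completes the verification.

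The only substantive input is the Lie algebra morphism property of $\mu^*$, which is precisely the preceding Proposition, so no real obstacle remains: the argument is a direct comparison of two pullbacks using the universal characterization of the KKS Poisson bracket by its restriction to linear coordinates. The only mildly delicate point is the density-type reduction in the first step, but it is standard since the Poisson bivector is determined pointwise by pairings with cotangent vectors, and linear functions already exhaust $T^*_\alpha \mathfrak{g}^* \cong \mathfrak{g}$.
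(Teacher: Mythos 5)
Your argument is correct. Note, however, that the paper itself gives no proof of this Proposition: it is quoted in the Introduction as a classical fact motivating the generalization to momentum sections, so there is no in-paper argument to compare against. Your route is the standard one: since both sides of the Poisson-map identity at a point $m$ depend only bilinearly on the differentials of $f$, $g$ at $\mu(m)$, and differentials of linear functions exhaust $T^*_\alpha\mathfrak{g}^*\cong\mathfrak{g}$, it suffices to check the identity on $\xi,\eta\in\mathfrak{g}$; there the left side is $\{\mu_\xi,\mu_\eta\}_M=\mu_{[\xi,\eta]}$ by the Lie algebra morphism property of the comomentum map (the paper's preceding Proposition, also stated without proof), and the right side is $\mu_{[\xi,\eta]}$ by the defining formula $\{\xi,\eta\}_{\mathrm{KKS}}(\alpha)=\langle\alpha,[\xi,\eta]\rangle$. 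The only point worth flagging is that the identity $\{\mu_\xi,\mu_\eta\}=\mu_{[\xi,\eta]}$ is not a consequence of the Hamiltonian condition $\iota_{\xi_M}\omega=\rd\mu_\xi$ alone; it requires (infinitesimal) equivariance of $\mu$, which is implicitly built into the paper's notion of momentum map (and into its unlabeled Proposition on the comomentum map). Since you invoke that Proposition explicitly rather than rederiving the bracket identity from the Hamiltonian condition, your proof is complete as written, with signs absorbed into the convention chosen for the KKS structure.
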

The purpose of this paper is to generalize this proposition to a momentum section.
Here a~Poisson map is defined as follows.
\begin{Definition}
Let $(M_1, \pi_1)$ and $(M_2, \pi_2)$ be Poisson manifolds.
A smooth map $\psi$ is a \textit{Poisson map} if
$\psi^*\colon C^{\infty}(M_2) \rightarrow C^{\infty}(M_1)$ satisfies
$
\psi^* \sbv{f}{g}_2 = \sbv{\psi^* f}{\psi^* g}_1$,
for $f, g \in C^{\infty}(M_2)$, where~${\sbv{-}{-}_1}$
and $\sbv{-}{-}_2$ are Poisson brackets on $M_1$ and $M_2$.
\end{Definition}

A momentum section $\mu \in \Gamma(A^*)$ is regarded as a map $\mu\colon M \rightarrow A^*$.
A generalization of Proposition \ref{MMPM} for a bracket-compatible momentum section has been analyzed in \cite{Blohmann:2023}.
On the dual of the Lie algebroid $A^*$, a bivector field
$\wpi_{A^*} = \wpi + \pi_{A^*}$ is defined, where $\wpi$
is the lift of the Poisson bivector field $\pi$ on $M$ to $A^*$
and $\pi_{A^*}$ is a Poisson bivector field induced from a~Lie algebroid structure on $A$.
In the Hamiltonian Lie algebroid over a Poisson manifold~$M$,
${\mu\colon M \rightarrow A^*}$ is a~bivector map
if the basic curvature vanishes, ${}^A S =0$.
Here a bivector map~${\psi\colon M \rightarrow A^*}$ is a bilinear map such that
$
\sbv{\psi^* a}{\psi^* b}_M = \psi^* \sbv{a}{b}_{A^*}$,
for every $a, b \in C^{\infty}(A^*)$,
where a bilinear bracket
$\sbv{-}{-}_M$ is the Poisson bracket induced from~$\pi_M$
and $\sbv{-}{-}_{A^*}$ is a bilinear bracket induced from $\wpi_{A^*}$.
In this result, $\wpi_{A^*}$ is not necessarily a Poisson bivector field on $A^*$, i.e., $\sbv{-}{-}_{A^*}$ does not satisfy the Jacobi identity, which means that $\mu$ is not necessarily a Poisson map.\looseness=-1

\begin{Question}
Is a momentum section $\mu$ regarded as a Poisson map of two proper Poisson manifolds?
\end{Question}

We prove that a bracket-compatible momentum section is a
Poisson map from $T^*M \oplus \bR$ to $A^*$ under Poisson structures
induced from the Poisson structure $\pi_M$ on $M$ and
the Poisson structure $\pi_{A^*}$ induced from
the Lie algebroid structure on $A$.

This paper is organized as follows.
Section \ref{sec2} is the preparation.
In Section \ref{sec2}, after a~Lie algebroid, connections
and related notion are introduced,
a Hamiltonian Lie algebroid and a~momentum section over a~pre-symplectic manifold and a Poisson manifold are explained.
Moreover, a~Courant algebroid and a~Dirac structure are introduced.
In Section \ref{sec3}, a relation of a momentum section
with the basic curvature is discussed and some formulas are given.
In Section \ref{sec:CMSLAM}, comomentum sections are defined for
a pre-symplectic case and a Poisson case, and Lie algebroid morphisms
induced from momentum sections are constructed.
In Section \ref{sec5}, a Poisson map induced from a momentum section is constructed.
In Section \ref{sec:Dirac}, a momentum section is reinterpreted as a Dirac morphism.

\section{Preliminary}\label{sec2}

In this section, we summarize definitions and previous results,
including a Lie algebroid and connections, momentum sections and Hamiltonian Lie algebroids over a pre-symplectic manifold and over a Poisson manifold,
as well as a Courant algebroid and a Dirac structure.

\subsection{Lie algebroids}

\begin{Definition}
Let $A$ be a vector bundle over a smooth manifold $M$.
A Lie algebroid $(A, [-,-], \rho=\rhoa)$ is a vector bundle $A$ with
a bundle map $\rho=\rhoa\colon A \rightarrow TM$ called the anchor map,
and a Lie bracket
$[-,-]= [-,-]_A\colon \Gamma(A) \times \Gamma(A) \rightarrow \Gamma(A)$
satisfying the Leibniz rule,
$
[e_1, fe_2] = f [e_1, e_2] + \rhoa(e_1) f \cdot e_2$,
where $e_1, e_2 \in \Gamma(A)$ and $f \in C^{\infty}(M)$.
\end{Definition}
A Lie algebroid is a generalization of a Lie algebra and the space of vector fields on a smooth manifold.
\begin{Example}[Lie algebras]
Let a manifold $M$ be one point $M = \{\mathrm{pt} \}$.
Then a Lie algebroid is a Lie algebra $\mathfrak{g}$.
\end{Example}
\begin{Example}[tangent Lie algebroids]\label{tangentLA}
Let a vector bundle $A$ be a tangent bundle $TM$, $\rhoa = \mathrm{id}$
and a bracket $[-,-]$ is a normal Lie bracket
on the space of vector fields $\mathfrak{X}(M)$.
$TM$ together with $\rho$ and $[-,-]$ is a Lie algebroid.
It is called a \textit{tangent Lie algebroid}.
\end{Example}

\begin{Example}[action Lie algebroids]\label{actionLA}
Assume a smooth action of a Lie group $G$ to a smooth manifold $M$,
{$M \times G \rightarrow M$.}
The differential map induces an infinitesimal action on the manifold~$M$ of the Lie algebra $\mathfrak{g}$ of $G$.
Since $\mathfrak{g}$ acts as a differential operator on $M$,
the differential map
is a bundle map $\rho\colon M \times \mathfrak{g} \rightarrow TM$.
Consistency of a Lie bracket requires that $\rho$ is
a Lie algebra morphism such that
\begin{gather}
~[\rho(e_1), \rho(e_2)] = \rho([e_1, e_2]),
\label{almostLA}
\end{gather}
where the bracket on the left-hand side
of \eqref{almostLA} is the Lie bracket of vector fields.
These data give a Lie algebroid $(A= M \times \mathfrak{g}, [-,-], \rho)$.
This Lie algebroid is called an \textit{action Lie algebroid}.
\end{Example}

\begin{Example}[Poisson Lie algebroids]\label{Poisson}
A bivector field $\pi \in \Gamma\bigl(\wedge^2 TM\bigr)$ is called a Poisson bivector field if $[\pi, \pi]_S =0$, where $[-,-]_S$ is the Schouten bracket on the space of multivector fields, $\Gamma(\wedge^{\bullet} TM)$.
A smooth manifold $M$ with a Poisson bivector field $\pi$ is called a Poisson manifold and denoted by $(M, \pi)$.

Let $(M, \pi)$ be a Poisson manifold. Then a Lie algebroid structure is induced on $T^*M$.
A~bundle map is defined as
$\pi^{\sharp}\colon T^*M \rightarrow TM$ by $\bigl\langle\pi^{\sharp}(\alpha),\beta\bigr\rangle
= \pi(\alpha, \beta)$ for all $\beta \in \Omega^1(M)$.
$\rho= - \pi^{\sharp}$ is the anchor map, and
a Lie bracket on $\Omega^1(M)$ is defined by the Koszul bracket
\begin{gather*}
[\alpha, \beta]_{\pi} = \calL_{\pi^{\sharp} (\alpha)}\beta - \calL_{\pi^{\sharp} (\beta)} \alpha - \rd(\pi(\alpha, \beta)),
\end{gather*}
where $\alpha, \beta \in \Omega^1(M)$.
$\bigl(T^*M, [-, -]_{\pi}, -\pi^{\sharp}\bigr)$ is a Lie algebroid.
\end{Example}

One can refer to reviews and textbooks for basic properties of Lie algebroids, see, for instance,~\cite{Mackenzie}.

For a Lie algebroid $A$, sections of the exterior algebra of $A^*$ are called \textit{$A$-differential forms}.
A~differential ${}^A \rd\colon \Gamma(\wedge^m A^*)
\rightarrow \Gamma\bigl(\wedge^{m+1} A^*\bigr)$ on the spaces of $A$-differential forms, $\Gamma(\wedge^{\bullet} A^*)$,
called a \textit{Lie algebroid differential}, or an \textit{$A$-differential},
is defined as follows.
\begin{Definition}
For an $A$-differential form $\eta \in \Gamma(\wedge^m A^*)$,
a Lie algebroid differential ${}^A \rd\colon \allowbreak\Gamma(\wedge^m A^*)
\rightarrow \Gamma\bigl(\wedge^{m+1} A^*\bigr)$ is defined by
\begin{align*}
{}^A \rd \eta(e_1, \ldots, e_{m+1})
={}& \sum_{i=1}^{m+1} (-1)^{i-1} \rhoa(e_i) \eta(e_1, \ldots,
\check{e_i}, \ldots, e_{m+1})
 \\ &
+ \sum_{1 \leq i < j \leq m+1} (-1)^{i+j} \eta([e_i, e_j], e_1, \ldots, \check{e_i}, \ldots, \check{e_j}, \ldots, e_{m+1}),
\end{align*}
where $e_i \in \Gamma(A)$, and the ha\'{c}\v{e}k symbol signifies omitting that argument.
\end{Definition}
The $A$-differential satisfies $\bigl({}^A \rd\bigr)^2=0$.
It is a generalization of the de Rham differential on~$T^*M$ and the Chevalley--Eilenberg differential on a Lie algebra.

\begin{Definition}
Assume that $(A_1, [-,-]_1, \rhoa_1)$ and $(A_2, [-,-]_2, \rhoa_2)$ are
two Lie algebroids over $M$.
A \textit{Lie algebroid morphism} between two Lie algebroids $A_1$ and $A_2$
is a vector bundle morphism $\phi\colon A_1 \rightarrow A_2$
such that
\begin{gather}
\phi([e_1, e_2]_1) = [\phi(e_1), \phi(e_2)]_2,\nonumber
\\
\rhoa_2 \circ \phi = \rhoa_1
\label{LAmorphism02}
\end{gather}
for $e_1, e_2 \in \Gamma(A_1)$.\footnote{For Lie algebroids over different base manifolds, we can define more general Lie algebroid morphism \cite{Mackenzie}.
For two Lie algebroids $(A_1, M_1)$ and $(A_2, M_2)$, a morphism $\phi\colon A_1 \rightarrow A_2$ is a vector bundle morphism whose graph
$\mathrm{Gr}(\phi) \subset A_1 \times A_2$ is a Lie subalgebroid of $A_1 \times A_2$.}
\end{Definition}

\subsection{Connections on Lie algebroids}\label{connectionLA}
We introduce several connections on a vector bundle $E$.
\begin{Definition}
A connection is an $\bR$-linear map,
$\nabla\colon\Gamma(E)\rightarrow \Gamma(E \otimes T^*M)$,
satisfying the Leibniz rule,
$
\nabla (f s) = f \nabla s + (\rd f) \otimes s$,
for $s \in \Gamma(E)$ and $f \in C^{\infty}(M)$.
A dual connection on $E^*$ is defined by the equation
\[
\rd \inner{\mu}{s} = \bracket{\nabla \mu}{s} + \bracket{\mu}{\nabla s}
\]
for all sections $\mu \in \Gamma(E^*)$ and $s \in \Gamma(E)$,
where $\bracket{-}{-}$ is the pairing between $E$ and $E^*$.
\end{Definition}
We use the same notation $\nabla$ for the dual connection.

On a Lie algebroid, another derivation called an $A$-connection is defined.
\begin{Definition}
Let $A$ be a Lie algebroid over a smooth manifold $M$ and $E$ be a vector bundle over the same base manifold $M$.
An \textit{$A$-connection} on a vector bundle $E$
with respect to the Lie algebroid $A$ is a $\bR$-linear
map,
$\anabla\colon \Gamma(E) \rightarrow \Gamma(E \otimes A^*)$,
satisfying
\[
\anabla_e (f s) = f \anabla_e s + (\rhoa(e) f) s
\]
for $e \in \Gamma(A)$, $s \in \Gamma(E)$ and $f \in C^{\infty}(M)$.
\end{Definition}
The ordinary connection is regarded as an
$A$-connection for $A=TM$, $\nabla = {}^{TM} \nabla$.

If an ordinary connection $\nabla$ on $A$ as a vector bundle is given,
an $A$-connection on $A$ is simply given by
\smash{$
 \anabla_{e} e^{\prime} := \nabla_{\rhoa(e^{\prime})} {e}$},
for $e, e^{\prime} \in \Gamma(A)$.

Another $A$-connection called the \textit{basic $A$-connection} on
the tangent bundle $E=TM$,
$\smash{\nablabas}\colon\allowbreak \Gamma(TM) \rightarrow \Gamma(TM \otimes A^*)$
is defined.
\begin{Definition}
The \textit{basic $A$-connection} on $TM$,
\smash{$\nablabas\colon \Gamma(TM) \rightarrow \Gamma(TM \otimes A^*)$}
is defined~by
\begin{gather}
\nablabas_{e} v := \calL_{\rhoa(e)} v + \rhoa(\nabla_v e)
= [\rhoa(e), v] + \rhoa(\nabla_v e),
\label{stEconnection1}
\end{gather}
where
$e \in \Gamma(A)$ and $v \in \mathfrak{X}(M)$.
\end{Definition}
For a $1$-form $\alpha \in \Omega^1(M)$, the basic $A$-connection
is given by
\begin{gather}
\nablabas_{e} \alpha := \calL_{\rhoa(e)} \alpha
+ \bracket{\rhoa(\nabla e)}{\alpha}.
\label{Econoneform}
\end{gather}
Throughout this paper, $A$-connections $\anabla$ on $TM$ and $T^*M$ are always
the basic $A$-connection \smash{$\anabla = \nablabas$},
\eqref{stEconnection1} and
\eqref{Econoneform}.

Given a connection $\nabla$, a covariant derivative is generalized
to the derivation on the space of differential forms taking a value on
$\wedge^m A^*$,
$\Omega^k(M, \wedge^m A^*)$ called an \textit{exterior covariant derivative}.
Similarly, an $A$-connection $\anabla$ can be generalized to the derivation
satisfying the Leibniz rule for sections on $\wedge^m A^*$.
It is called \textit{the $A$-exterior covariant derivative} $\anabla$.
They are denoted by the same notation $\nabla$ and ${}^A\nabla$.

Let $\Omega^k(M, \wedge^m A^*) $ be the space of $k$-forms taking values on
$\wedge^m A^*$.
\begin{Definition}
For $\Omega^k(M, \wedge^m A) = \Gamma\bigl(\wedge^m A^* \otimes \wedge^k T^*M\bigr)$,
the \textit{$A$-exterior covariant derivative}
$\anabla\colon \Omega^k(M, \wedge^m A^*) \rightarrow \Omega^k\bigl(M, \wedge^{m+1} A^*\bigr)$ is defined by
\begin{align*}
\bigl(\anabla \alpha\bigr)(e_1, \ldots, e_{m+1})
:={}& \sum_{i=1}^{m+1} (-1)^{i-1}
\anabla_{e_i}
(\alpha(e_1, \ldots, \check{e_i}, \ldots, e_{m+1}))
\nonumber \\ &
+ \sum_{1 \leq i < j \leq m+1} (-1)^{i+j} \alpha([e_i, e_j], e_1, \ldots, \check{e_i}, \ldots, \check{e_j}, \ldots, e_{m+1})
\end{align*}
for $\alpha \in \Omega^k(M, \wedge^m A^*)$ and $e_i \in \Gamma(A)$.
\end{Definition}
Note that the $A$-exterior covariant derivative increases the order of $\wedge^m A^*$.

The $A$-exterior covariant derivative for $E=TM$ is given by
taking the pairing,
\begin{gather*}
{}^A \rd \bracket{\phi}{\alpha} =
\bigl\langle\bigl(\anabla \phi\bigr),\alpha\bigr\rangle
+ \bigl\langle\phi,\bigl(\anabla \alpha\bigr)\bigr\rangle
\end{gather*}
for $\phi \in \mathfrak{X}^k(M, \wedge^m A^*)$
and $\alpha_i \in \Omega^k(M)$.

One can refer to \cite{AbadCrainic, CrainicFernandes, DufourZung}
about a theory of general and basic $A$-connections on a Lie algebroid.

For two connections $\nabla$ and $\anabla$, various torsions and curvatures are introduced.
Additional to the normal curvature $R \in \Omega^2(M, A \otimes A^*)$ and the torsion for a vector bundle connection $\nabla$,
similar quantities for the $A$-connections are introduced.
An \textit{$A$-curvature} ${}^A R \in \Gamma\bigl(\wedge^2 A^* \otimes A \otimes A^*\bigr)$ is defined by
\[
{}^A R(e, e^{\prime}) := \bigl[{}^A\nabla_e, {}^A\nabla_{e^{\prime}}\bigr] - {}^A\nabla_{[e, e^{\prime}]}
\]
for $e, e^{\prime} \in \Gamma(A)$. It does not appear explicitly in our paper.
Important geometric quantities are an ordinary curvature, an $A$-torsion
and a basic curvature \cite{Blaom}.
\begin{Definition}
An ordinary \textit{curvature} $R \in \Omega^2(M, A \otimes A^*)$,
an \textit{$A$-torsion}, ${}^A T \in \Gamma\bigl(A \otimes \wedge^2 A^*\bigr)$,
and a \textit{basic curvature}, $\baS \in \Omega^1\bigl(M, \wedge^2 A^* \otimes A\bigr)$, are defined by
\begin{gather}
R(v, v^{\prime}) := [\nabla_v, \nabla_{v^{\prime}}] - \nabla_{[v, v^{\prime}]},\nonumber
\\
{}^A T(e, e^{\prime}) := \nabla_{\rho(e)} e^{\prime} - \nabla_{\rho(e^{\prime})} e
- [e, e^{\prime}],
\label{Etorsion}
\\
\baS(e, e^{\prime}) :=
[e, \nabla e^{\prime}] - [e^{\prime}, \nabla e]
- \nabla[e, e^{\prime}]
- \nabla_{\rhoa(\nabla e)} e^{\prime} + \nabla_{\rhoa(\nabla e^{\prime})} e\nonumber\\
\phantom{\baS(e, e^{\prime})}{}= (\nabla {}^A T + 2 \mathrm{Alt} \, \iota_\rho R)(e, e^{\prime})
\nonumber
\end{gather}
for $v, v^{\prime} \in \mathfrak{X}(M)$ and $e, e^{\prime} \in \Gamma(A)$.
Note that $\iota_\rho R \in \Omega^1(M, A^* \otimes A^* \otimes A)$ since
$\rho \in \Gamma(A^* \otimes TM)$ and $\iota_\rho$ gives the contraction
between $TM$ and $T^*M$. Notation $\mathrm{Alt}$ means skew symmetrization on $A^* \otimes A^*$, which gives an element in $\Omega^1\bigl(M, \wedge^2 A^* \otimes A\bigr)$.
\end{Definition}

\subsection{Momentum sections and Hamiltonian Lie algebroids}\label{MSHLA}
In this section, a bracket-compatible momentum section and a Hamiltonian Lie
algebroid, which are a generalization of a momentum map on a symplectic
manifold, are reviewed \cite{Blohmann:2023,Blohmann:2018}.

A closed $2$-form $\omega \in \Omega^2(M)$ on $M$ is called a pre-symplectic form. A pair $(M, \omega)$ of a manifold~$M$ and a pre-symplectic form $\omega$ is called a pre-symplectic manifold. If $\omega$ is nondegenerate, $(M, \omega)$ is a symplectic manifold.

On a pre-symplectic manifold $M$, the following three conditions are introduced.

\begin{Definition}[momentum sections over pre-symplectic manifolds]\label{momsymp}
Suppose that a base manifold $(M, \omega)$ is a pre-symplectic manifold, and
take a Lie algebroid $(A, [-,-], \rhoa)$ over $M$.
\begin{itemize}\itemsep=0pt
\item[(S1)] A Lie algebroid $A$ is called \textit{pre-symplectically anchored} if $\omega$ satisfies
\begin{gather}
\anabla \omega =0.\label{HH1}
\end{gather}
\item[(S2)] A section $\mu \in \Gamma(A^*)$ is a $\nabla$-\textit{momentum section} if it satisfies\footnote{Notation of the momentum section such as $\mu(e)$, $(\nabla \mu)(e)$, etc.~are in fact pairings of $A^*$ and $A$, $\bracket{\mu}{e}$, $\bracket{\nabla \mu}{e}$, etc.}
\begin{gather}
(\nabla \mu)(e) = - \iota_{\rhoa(e)} \omega\label{HH2}
\end{gather}
for $e \in \Gamma(A)$.
\item[(S3)] $\mu$ is \textit{bracket-compatible} if it satisfies
\begin{gather}
\bigl({}^A \rd \mu\bigr)(e_1, e_2) = \omega(\rhoa(e_1), \rhoa(e_2)),
\label{HH3}
\end{gather}
where $e_1, e_2 \in \Gamma(A)$.
\end{itemize}
\end{Definition}
Note that the above definition depends on choice of a connection $\nabla$.

\begin{Definition}\label{HamiltonianLAS}
A Lie algebroid $A$ over a pre-symplectic manifold with a connection $\nabla$ and a~section $\mu \in \Gamma(A^*)$ is called \textit{Hamiltonian}\footnote{If the condition is satisfied on a neighborhood of every point in $M$, it is called \textit{locally Hamiltonian} \cite{Blohmann:2018}. All the analysis in this paper are applicable in the locally Hamiltonian case.}
if equations~\eqref{HH1}, \eqref{HH2} and \eqref{HH3}
are satisfied.
\end{Definition}
On a trivial bundle, a momentum section is equivalent to a momentum map.
Suppose that~$M$ has an action of a Lie group $G$
and $\omega$ is a symplectic form.
For a Lie algebra $\mathfrak{g}$ of $G$, a trivial bundle
$A = M \times \mathfrak{g}$ has an action Lie algebroid structure
in Example \ref{actionLA}.
A section $e \in \Gamma(M \times \mathfrak{g})$ is
restricted to the constant section, which is identified
to an element of $\mathfrak{g}$.
We can take a trivial connection $\nabla=\rd$ on the trivial bundle
$M \times \mathfrak{g}$.
Then conditions of Definition \ref{momsymp} reduce
to the following conditions.
Equation \eqref{HH2} is
\begin{gather}
(\rd \mu)(e) = - \iota_{\rhoa(e)} \omega,
\label{MM2}
\end{gather}
where $e$ is a constant section.
Equation~\eqref{MM2} means that $\mu(e)$ is the Hamiltonian function for the Lie algebra action $\rhoa(e)$.
Equation~\eqref{HH1} is
$\anabla_e \omega = \calL_{\rhoa(e)} \omega = 0$ from the definition of
the $A$-connection.
This equation is trivially satisfied from $\rd \omega =0$ and equation~\eqref{MM2}.
Equation~\eqref{HH3} is equivalent to
\[
\rhoa(e_1) \mu(e_2) = \mu([e_1, e_2])
\]
under \eqref{HH2} and \eqref{HH1}, which means that $\mu$ is infinitesimally
equivariant.
Since the section $\mu \in \Gamma(M \times \mathfrak{g}^*)$ is a map
$\mu\colon M \rightarrow \mathfrak{g}^*$,
therefore, $\mu$ is a momentum map on the pre-symplectic manifold $M$.

A Hamiltonian Lie algebroid over a Poisson manifold is defined as follows. \cite{Blohmann:2023}
\begin{Definition}[momentum sections over Poisson manifolds]\label{momPois}
Let $(M, \pi)$ be a Poisson manifold with a Poisson bivector field $\pi \in \Gamma\bigl(\wedge^2 TM\bigr)$
and $(A, [-,-], \rhoa)$ be a Lie algebroid over $M$.
\begin{itemize}\itemsep=0pt
\item[(P1)] $A$ is called \textit{Poisson anchored} if $\pi$ satisfies
\begin{gather}
\anabla \pi = 0.
\label{PMS1}
\end{gather}
\item[(P2)] A section $\mu \in \Gamma(A^*)$ is a $\nabla$-\textit{momentum section} if it satisfies
\begin{gather}
\rhoa(e) = - \pi^{\sharp} ((\nabla \mu)(e))\label{PMS2}
\end{gather}
for $e \in \Gamma(A)$.
\item[(P3)] $\mu$ is called \textit{bracket-compatible} if it satisfies
\begin{gather}
\bigl({}^A \rd \mu\bigr) (e_1, e_2) = - \pi((\nabla \mu)(e_1), (\nabla \mu)(e_2))\label{PMS3}
\end{gather}
for $e_1, e_2 \in \Gamma(A)$.
\end{itemize}
\end{Definition}
In Definition \ref{momPois}, conditions depend on choice of
a connection $\nabla$.

\begin{Definition}\label{HamiltonianLAP}
A Lie algebroid $A$ over a Poisson manifold with a connection $\nabla$ and a section~${\mu \in \Gamma(A^*)}$ is called \textit{Hamiltonian}
if equations~\eqref{PMS1}, \eqref{PMS2} and \eqref{PMS3}
are satisfied.
\end{Definition}
If $\pi$ is nondegenerate, $M$ is a symplectic manifold with
$\omega = \pi^{-1}$.
A Hamiltonian Lie algebroid over a nondegenerate Poisson manifold
is a Hamiltonian Lie algebroid over a symplectic manifold
as per Definition \ref{HamiltonianLAS}.

\subsection{Courant algebroids and Dirac structures}
In this subsection, a Courant algebroid and a Dirac structure \cite{Courant, LWX} are introduced
as preparations for the following sections.

\begin{Definition}\label{courantdefinition}
A Courant algebroid is a vector bundle $E$ over $M$,
which has a nondegenerate symmetric bilinear form
$\bracket{-}{-}$, a bilinear operation $\courant{-}{-}$ on $\Gamma(E)$,
and a bundle map called an anchor map,
$\rhoe\colon E \longrightarrow TM$, satisfying the following properties:
\begin{itemize}\itemsep=0pt
\item[(1)] $\courant{e_1}{\courant{e_2}{e_3}} = \courant{\courant{e_1}{e_2}}{e_3} + \courant{e_2}{\courant{e_1}{e_3}}$,
\item[(2)] $\rhoe(\courant{e_1}{e_2}) = [\rhoe(e_1), \rhoe(e_2)]$,
\item[(3)] $\courant{e_1}{f e_2} = f \courant{e_1}{e_2}
+ (\rhoe(e_1)f)e_2$,
\item[(4)] $\courant{e}{e} = \frac{1}{2} {\cal D} \bracket{e}{e}$,
\item[(5)] $\rhoe(e_1) \bracket{e_2}{e_3}
= \bracket{\courant{e_1}{e_2}}{e_3} + \bracket{e_2}{\courant{e_1}{e_3}}$,
\end{itemize}
where
$e, e_1, e_2, e_3 \in \Gamma(E)$, $f \in C^{\infty}(M)$ and
${\cal D}$ is a map from $C^{\infty}(M)$ to $\Gamma (E)$,
defined as
$\bracket{{\cal D}f}{e} = \rhoe(e) f$
\cite{LWX}.
\end{Definition}
The bilinear bracket $\courant{-}{-}$ is called the Dorfman bracket.
A Courant algebroid is encoded in the quadruple $(E, \bracket{-}{-}, \courant{-}{-}, \rhoe)$.

\begin{Example}\label{standardCA}
The \emph{standard Courant algebroid} is the Courant algebroid
as defined below on the vector bundle $E = TM \oplus T^*M$.

The three operations of the standard Courant algebroid
are defined as follows:
\begin{gather*}
	\bracket{u + \alpha}{v + \beta} = \iota_u \beta + \iota_v \alpha,\qquad
\rhott(u+\alpha) = u, \\
\courant{u + \alpha}{v + \beta} = [u, v] + \calL_u \beta - \iota_v \rd \alpha + \iota_u \iota_v H
\end{gather*}
for $u + \alpha, v + \beta \in \Gamma(TM \oplus T^*M)$,
where $u$, $v$ are vector fields, $\alpha$, $\beta$ are $1$-forms,
and $H \in \Omega^3(M)$ is a closed $3$-form on $M$.
\end{Example}

\begin{Definition}
A \emph{Dirac structure} $L$ is a
maximally isotropic subbundle of a Courant algebroid~$E$,
whose sections are closed under the Dorfman bracket,
i.e., $L$ is a subbundle of a~Courant algebroid satisfying
$
\bracket{e_1}{e_2}=0$ (isotropic),
$
 \courant{e_1}{e_2} \in \Gamma(L)$ (closed)
for every $e_1, e_2 \in \Gamma(L)$.
\end{Definition}

The following proposition is a basic fact for a Dirac structure.
\begin{Proposition}[\cite{LWX}]
A Dirac structure $L$ is a Lie algebroid.
\end{Proposition}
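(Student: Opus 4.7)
The plan is to equip $L$ with a Lie algebroid structure by restriction of the ambient Courant algebroid data. Define the anchor as $\rho_L := \rhoe|_L \colon L \to TM$ and the bracket $[-,-]_L$ on $\Gamma(L)$ as the restriction of the Dorfman bracket $\courant{-}{-}$. The closedness condition of the Dirac structure guarantees that this restriction is well defined, i.e., $\courant{e_1}{e_2} \in \Gamma(L)$ whenever $e_1, e_2 \in \Gamma(L)$.

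The key step is to upgrade the Dorfman bracket, which on $E$ satisfies only the Loday--Jacobi identity (1) of Definition \ref{courantdefinition}, to a genuine Lie bracket on $L$. By isotropy, $\bracket{e}{e} = 0$ for every $e \in \Gamma(L)$, and property (4) then yields $\courant{e}{e} = \frac{1}{2}\calD\bracket{e}{e} = 0$. Polarizing in $e = e_1 + e_2$ and using $\bracket{e_i}{e_j}=0$ on $L$ gives $\courant{e_1}{e_2} + \courant{e_2}{e_1} = 0$, so $[-,-]_L$ is skew-symmetric. Combined with (1), skew-symmetry turns the Loday--Jacobi relation into the usual Jacobi identity for a Lie bracket on $\Gamma(L)$.

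The remaining axioms are immediate restrictions: the Leibniz rule $[e_1, f e_2]_L = f [e_1, e_2]_L + (\rho_L(e_1)f)\, e_2$ is a verbatim specialization of property (3), and compatibility with the Lie bracket of vector fields, $\rho_L([e_1,e_2]_L) = [\rho_L(e_1), \rho_L(e_2)]$, follows from property (2). No genuine obstacle arises; the only conceptual point worth flagging is that maximal isotropy is not used in verifying the algebroid axioms themselves, but only enters through isotropy (which forces skew-symmetry via (4)) together with Dorfman-closedness.
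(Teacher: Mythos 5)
Your argument is correct and is exactly the standard proof that the paper implicitly relies on: the paper states this proposition as a quoted fact from the reference [LWX] and gives no proof of its own, so there is nothing to compare against beyond noting that your restriction-of-structure argument (closedness for well-definedness, isotropy plus axiom (4) for skew-symmetry, Loday identity (1) for Jacobi, axioms (3) and (2) for the Leibniz rule and anchor compatibility) is the canonical one. Your closing observation that only isotropy and involutivity, not maximality, are needed for the Lie algebroid axioms is also accurate.
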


\begin{Example}\label{symplecticdirac}
Let $(M, \omega)$ be a pre-symplectic manifold and
\[(TM \oplus T^*M, \bracket{-}{-}, \courant{-}{-}, \allowbreak\rhott)\]
 be a
standard Courant algebroid with $H=0$ in Example \ref{standardCA}.
A bundle map $\omega^{\flat}\colon TM \rightarrow T^*M$ is defined
by \smash{$\omega^{\flat}(u)(v) = \omega(u, v)$} for every $v \in \mathfrak{X}(M)$.
A subbundle $L_{\omega} \subset TM \oplus T^*M$ given~by
\[
L_{\omega} = \mathrm{Gr}(\omega) := \bigl\{ u + \omega^{\flat}(u) \mid u \in \mathfrak{X}(M) \bigr\}\]
is a Dirac structure.
In fact,
\smash{$\bigl\langle u + \omega^{\flat}(u),v + \omega^{\flat}(v)\bigr\rangle=0$}
since $\omega(u, v) = - \omega(v, u)$,
and the concrete calculation gives
\smash{$
\bigl[ u + \omega^{\flat}(u),v + \omega^{\flat}(v)\bigr]
= [u, v] + \omega^{\flat}([u, v])$} if $\rd \omega =0$, which means that~$L_{\omega}$ is involutive.
\end{Example}

\begin{Example}\label{Poissondirac}
Let $(M, \pi)$ be a Poisson manifold and
$(TM \oplus T^*M, \bracket{-}{-}, \courant{-}{-}, \rhott)$ be a
standard Courant algebroid with $H=0$.
For a bundle map $\pi^{\sharp}\colon T^*M \rightarrow TM$,
a subbundle~${L_{\pi} \subset TM \oplus T^*M}$ given by
\[
L_{\pi} = \mathrm{Gr}(\pi) := \bigl\{ -\pi^{\sharp}(\alpha) + \alpha \mid \alpha \in \Omega^1(M) \bigr\}
\]
is a Dirac structure.
In fact,
$\bigl\langle-\pi^{\sharp}(\alpha) + \alpha,-\pi^{\sharp}(\beta) + \beta\bigr\rangle=0$
since $\pi(\alpha, \beta) = - \pi(\beta, \alpha)$,
and the concrete calculation gives
\smash{$
\bigl[-\pi^{\sharp}(\alpha) + \alpha,-\pi^{\sharp}(\beta) + \beta\bigr]
= -\pi^{\sharp}([\alpha, \beta]_{\pi}) + [\alpha, \beta]_{\pi}$}
if $\pi$ is a Poisson bivector field, which means that $L_{\pi}$ is involutive.
\end{Example}

\section{Basic curvatures and momentum sections}\label{sec3}
Let $(A, \pi, \nabla,\mu)$ be a Hamiltonian Lie algebroid over a Poisson manifold.
The anchor map $\rhoa$ is a Lie algebroid morphism $\rhoa\colon A \rightarrow TM$,
i.e., $\rhoa$ satisfies
\begin{gather}
 [\rhoa(e_1), \rhoa(e_2)] = \rhoa([e_1, e_2])
\label{identity001}
\end{gather}
for $e_1, e_2 \in \Gamma(A)$.
The covariant forms of equation~\eqref{identity001}
is
\begin{gather}
(\nabla_{\rhoa(e_1)} \rhoa)(e_2) - (\nabla_{\rhoa(e_2)} \rhoa)(e_1)
+ \bigl\langle\rhoa,{}^A T(e_1, e_2)\bigr\rangle= 0.
\label{identity111}
\end{gather}
In fact, equation~\eqref{identity111} is proved as follows.
Using the Leibniz rule of the connection and equation~\eqref{Etorsion}, we obtain
\begin{align*}
\bigl\langle\rhoa,{}^A T(e_1, e_2)\bigr\rangle ={}&
\rhoa (\nabla_{\rhoa(e_1)} e_2) - \rhoa (\nabla_{\rhoa(e_2)} e_1)
- \rhoa([e_1, e_2])
 \\
={}&
\nabla_{\rhoa(e_1)} (\rhoa(e_2)) - \nabla_{\rhoa(e_2)} (\rhoa (e_1))
- (\nabla_{\rhoa(e_!)} \rhoa)(e_2) + (\nabla_{\rhoa(e_2)} \rhoa) ( e_1)
 \\ &
- [\rhoa(e_1), \rhoa(e_2)]
 \\
={}&
\calL_{\rhoa(e_1)} (\rhoa(e_2)) - \calL_{\rhoa(e_2)} (\rhoa (e_1))
- [\rhoa(e_1), \rhoa(e_2)]
 \\ &
- (\nabla_{\rhoa(e_1)} \rhoa)(e_2) + (\nabla_{\rhoa(e_2)} \rhoa) ( e_1)
 \\
={}&
- (\nabla_{\rhoa(e_1)} \rhoa)(e_2) + (\nabla_{\rhoa(e_2)} \rhoa) ( e_1).
\end{align*}
The Jacobi identity of the Lie bracket $[-,-]$ is equivalent to the equation
\[
\nabla_{\rhoa(e_1)} {}^A T(e_2, e_3) - {}^A T\bigl(e_1, {}^A T(e_2, e_3)\bigr)
- R(\rhoa(e_1), \rhoa(e_2), e_3) + \operatorname{Cycl}(e_1, e_2, e_3)
= 0.
\]

Before we analyze a comomentum section and a Poisson map,
we consider a formula between a momentum section and the basic curvature.
From the definition of the Hamiltonian Lie algebroid over a Poisson manifold,
we obtain the following formula, which is the key identity in our paper.
\begin{Lemma}\label{basicms}
In a Hamiltonian Lie algebroid over a Poisson manifold,
$\pi^{\sharp} \bigl\langle{}^A S,\mu\bigr\rangle =0$,
where $\bracket{-}{-} =0$ is the pairing of $A$ and $A^*$.
\end{Lemma}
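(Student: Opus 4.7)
The plan is to establish the stronger pointwise identity
\begin{gather*}
\bracket{\baS(e_1,e_2)}{\mu} \;=\; [(\nabla\mu)(e_1),\,(\nabla\mu)(e_2)]_{\pi} \;+\; (\nabla\mu)([e_1,e_2])
\end{gather*}
in $\Omega^1(M)$, where $[-,-]_\pi$ is the Koszul bracket of Example~\ref{Poisson}. Once this is available, applying $\pi^{\sharp}$ and invoking (P2) together with the standard fact that $\pi^{\sharp}$ intertwines the Koszul bracket with the Lie bracket of vector fields (because $\pi$ is Poisson) collapses the right-hand side to $[\rhoa(e_1),\rhoa(e_2)]-\rhoa([e_1,e_2])$, which vanishes by equation~\eqref{identity001}.

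To reach the key identity, I first expand $\bracket{\baS(e_1,e_2)}{\mu}$ at a test vector field $v\in\mathfrak{X}(M)$ using the defining formula for $\baS$, keeping in mind that the tensorial reading of $[e,\nabla e']$ carries an implicit $-\nabla_{[\rhoa(e),v]}e'$ correction. An intermediate bracket-compatibility identity
\begin{gather*}
\mu(\nabla_{\rhoa(e)}e') \;=\; \rhoa(e')\mu(e) + \mu([e,e'])
\end{gather*}
falls out of comparing the definition of $({}^A\rd\mu)(e_1,e_2)$ with (P3) after using (P2) to rewrite $\pi((\nabla\mu)(e_1),(\nabla\mu)(e_2))$. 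Feeding this into the two terms $\mu(\nabla_{\rhoa(\nabla_v e_i)}e_j)$ makes the contributions $\mu([e_i,\nabla_v e_j])$ cancel, and the dual-connection rule $\mu(\nabla_v e) = v(\mu(e)) - (\nabla\mu)(e)(v)$ rearranges what remains into
\begin{gather*}
\bracket{\baS(e_1,e_2)}{\mu} \;=\; \rd\bigl(({}^A\rd\mu)(e_1,e_2)\bigr) - \calL_{\rhoa(e_1)}(\nabla\mu)(e_2) + \calL_{\rhoa(e_2)}(\nabla\mu)(e_1) + (\nabla\mu)([e_1,e_2]).
\end{gather*}
A final application of (P3) turns $\rd(({}^A\rd\mu)(e_1,e_2))$ into $-\rd\pi((\nabla\mu)(e_1),(\nabla\mu)(e_2))$, and (P2) converts each $\rhoa(e_i)$ inside a Lie derivative into $-\pi^{\sharp}(\nabla\mu)(e_i)$; the three resulting terms assemble precisely into the Koszul bracket of $(\nabla\mu)(e_1)$ and $(\nabla\mu)(e_2)$, completing the identity.

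The main obstacle is the bookkeeping in these intermediate steps: the implicit Lie-derivative corrections hidden in $[e_i,\nabla e_j]$, combined with the repeated use of the bracket-compatibility identity, must be tracked carefully so that all the spurious $\mu([e_i,\nabla_v e_j])$ terms cancel out and the final expression is manifestly Koszul. Note that condition~(P1), $\nabla\pi=0$, plays no role in this argument.
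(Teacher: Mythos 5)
Your proof is correct, and its skeleton matches the paper's: both arguments hinge on (i) an identity expressing $\bracket{\baS(e_1,e_2)}{\mu}$ as $[(\nabla \mu)(e_1),(\nabla \mu)(e_2)]_{\pi}+(\nabla \mu)([e_1,e_2])$, which is exactly the paper's local-coordinate equation~\eqref{identity12}, and (ii) the observation that applying $\pi^{\sharp}$ annihilates this expression via (P2) and the anchor identity~\eqref{identity001}. Where you differ is in execution. The paper works entirely in local coordinates: it obtains equation~\eqref{identity12} by covariantly differentiating the covariant form~\eqref{P3covlocal} of (P3) and invoking $\baS=\nabla\,{}^A T+2\,\mathrm{Alt}\,\iota_{\rho}R$, separately derives the $\pi^{\sharp}$-contracted identity~\eqref{identity11} by substituting (P2) into the Lie algebroid identity~\eqref{LAidentity1} together with the Poisson condition~\eqref{Poissonidentity}, and concludes by comparing the two. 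You instead prove the key $1$-form identity globally, by expanding the bracket definition of $\baS$ against $\mu$ (with the tensoriality corrections you flag) and using the intermediate identity $\mu(\nabla_{\rhoa(e)}e')=\rhoa(e')\mu(e)+\mu([e,e'])$, which is equivalent to~\eqref{P3covlocal} given (P2) and the dual-connection rule; I checked that the cancellations you describe do occur and that the remaining Lie-derivative and exact terms reassemble into the Koszul bracket via (P2) and (P3). Your route yields a coordinate-free form of the stronger identity (which the paper only states globally, as~\eqref{identity34}, after already assuming $\bracket{\baS}{\mu}=0$) and packages the last step as the standard fact $\pi^{\sharp}[\alpha,\beta]_{\pi}=[\pi^{\sharp}\alpha,\pi^{\sharp}\beta]$, whereas the paper trades that bookkeeping for an index computation. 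Your remark that (P1) is never used is consistent with the paper's proof, which likewise does not invoke $\anabla\pi=0$.
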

\begin{proof}
They are proved by using local coordinate expressions.

For a Lie algebroid $(A, \rho, [-,-])$ over a smooth manifold $M$,
let $x^i$ be a local coordinate on~$M$, $e_a \in \Gamma(A)$ be a basis of the fiber of $A$ and $e^a \in \Gamma(A^*)$ be the dual basis on $A^*$.
The basis of~$TM$ is denoted by $\partial_i = \tfrac{\partial}{\partial x^i}$.
$i$, $j$, etc. are indices on $M$ and $a$, $b$, etc. are indices on
the fiber of~$A$ and~$A^*$.
Upper indices are ones for $TM$ and $A$, and lower indices are ones
for $T^*M$ and~$A^*$.\footnote{Einstein summation convention is used for
sums in local coordinate expressions.}

Local coordinate expressions of the anchor map and the Lie bracket are
$\rho(e_a)= \rho^i_a(x) \partial_i$ and~${[e_a, e_b ] = C_{ab}^c(x) e_c}$, where $\rho^i_a(x)$ and $C_{ab}^c(x)$
are local functions.
Identities of the Lie algebroid are\looseness=-1
\begin{gather}
 \rho_a^j \partial_j \rho_{b}^i - \rho_b^j \partial_j \rho_{a}^i = C_{ab}^c \rho_c^i,
\qquad
 C_{ad}^e C_{bc}^d + \rho_a^i \partial_i C_{bc}^e + \operatorname{Cycl}(abc) = 0.\label{LAidentity1}
\end{gather}
The condition of Poisson bivector field $\pi$, $[\pi, \pi]_S=0$, is
\begin{gather}
 \pi^{il} \partial_l \pi^{jk} + \operatorname{Cycl}(ijk) = 0.
\label{Poissonidentity}
\end{gather}
A connection $1$-form $\omega_a^b = \omega_{ai}^b \rd x^i$
for the connection $\nabla$ is introduced as
$\nabla_i e_a := \omega_{ai}^b e_b$ for the basis, $e_a$.
Local coordinate expressions of an $A$-torsion,
a curvature and a basic curvature are given by
\begin{gather}
T_{ab}^c =
- C_{ab}^c + \rho_a^i \omega_{bi}^c - \rho_b^i \omega_{ai}^c,
\label{lcAtosion}
\\
R_{ijb}^a =
\partial_i \omega_{aj}^b - \partial_j \omega_{ai}^b
+ \omega_{ai}^c \omega_{cj}^b - \omega_{aj}^c \omega_{ci}^b,
\nonumber
\\
S_{iab}^{c} =
\nabla_i T_{ab}^c + \rho_b^j R_{ija}^c - \rho_a^j R_{ijb}^c,
 \nonumber \\
\phantom{S_{iab}^{c} }{}= - \partial_i C^c_{ab} - \omega_{di}^c C_{ab}^d
+ \omega_{ai}^d C_{db}^c + \omega_{bi}^d C_{ad}^c
+ \rho_a^j \partial_j \omega_{bi}^c
- \rho_b^j \partial_j \omega_{ai}^c
\nonumber \\
\phantom{S_{iab}^{c} =}{}+ \partial_i \rho_a^j \omega_{bj}^c
- \partial_i \rho_b^j \omega_{aj}^c
+ \omega_{ai}^d \rho_d^j \omega_{bj} ^c
- \omega_{bi}^d \rho_d^j \omega_{aj} ^c,
\label{lcbasiccurv}
\end{gather}
where the covariant derivative $\nabla_i T_{ab}^c$ is given by
\[
\nabla_i T_{ab}^c =
\partial_i T_{ab}^c
+ \omega_{di}^c T_{ab}^d - \omega_{ai}^d T_{db}^c - \omega_{bi}^d T_{ad}^c.
\]

Equations~(P1), (P2) and (P3) in the definition of a Hamiltonian Lie algebroid are
\begin{gather}
{\rm (P1)} \quad {}^A \nabla_a \pi^{ij}
= \rho_a^k \partial_k \pi^{ij} - \partial_k \rho^i_a \pi^{kj}
+ \rho^i_b \omega^b_{ak} \pi^{kj}
- \partial_k \rho^j_a \pi^{ik}
+ \rho^j_b \omega^b_{ak} \pi^{ik} = 0,\nonumber\\
{\rm (P2)} \quad
\rho^i_a = \pi^{ij} \nabla_j \mu_a
\bigl(= \pi^{ij} \bigl(\partial_j \mu_a - \omega_{aj}^b \mu_b\bigr)\bigr),
\label{Pms12}\\
{\rm (P3)}\quad
\rho^i_{[a} \partial_i \mu_{b]} - C_{ab}^c \mu_c
= - \pi^{ij} \nabla_i \mu_a \nabla_j \mu_b
\bigl(= \pi^{ij} \bigl(\partial_i \mu_a - \omega_{ai}^c \mu_c\bigr)
\bigl(\partial_j \mu_b - \omega_{bj}^d \mu_d\bigr)\bigr).
\label{Pms13}
\end{gather}

Substituting \eqref{Pms12} into the identity \eqref{LAidentity1},
$\rho_a^j \partial_j \rho_{b}^i - \rho_b^j \partial_j \rho_{a}^i = C_{ab}^c \rho_c^i$,
we obtain the following equation:
\begin{gather}
 \pi^{ij} \bigl(-\pi^{kl} \partial_k \nabla_j \mu_a \nabla_l \mu_b
- \pi^{kl} \nabla_k \mu_a \partial_l \nabla_j \mu_b
- \partial_j \pi^{kl} \nabla_k \mu_a \nabla_l \mu_b
- C_{ab}^c \nabla_j \mu_c\bigr)=0.
\label{LAIden111}
\end{gather}
Here, we use $\pi$ is the Poisson bivector field, i.e.,
equation~\eqref{Poissonidentity}.
Moreover, equation~\eqref{LAIden111} is equivalent to
\begin{gather}
\pi^{ij} \bigl(\pi^{kl} \nabla_k \nabla_j \mu_a \nabla_l \mu_b
+ \pi^{kl} \nabla_k \mu_a \nabla_l \nabla_j \mu_b
+ \partial_j \pi^{kl} \nabla_k \mu_a \nabla_l \mu_b
- T_{ab}^c \nabla_j \mu_c\bigr)=0,
\label{identity11}
\end{gather}
which is the covariant expression of equation~\eqref{LAIden111}.

We consider another identity as follows.
Substituting \eqref{Pms12} to \eqref{Pms13}
and using equation~\eqref{Poissonidentity},
we obtain
\[
 \pi^{ij} \nabla_j \mu_a \partial_i \mu_b
- \pi^{ij} \nabla_j \mu_b \partial_i \mu_a
+ \pi^{ij} \nabla_i \mu_a \nabla_j \mu_b
- C_{ab}^c \mu_c=0.
\]
The covariant expression of this identity is
\begin{gather}
\pi^{kl} \nabla_k \mu_a \nabla_l \mu_b
- T_{ab}^c \mu_c=0,
\label{P3covlocal}
\end{gather}
where equation~\eqref{lcAtosion} is used.
The global form of equation~\eqref{P3covlocal} is
\[
\pi(\nabla \mu(e_1), \nabla \mu(e_2))
- \big\langle\mu,{}^A T(e_1, e_2)\big\rangle = 0
\]
for $e_1, e_2 \in \Gamma(A)$.

Acting the covariant derivative by $\nabla_j$ to equation~\eqref{P3covlocal},
and using the identity \eqref{lcbasiccurv},
\smash{$S_{iab}^{c} = \nabla_i T_{ab}^c + \rho_b^j R_{ija}^c - \rho_a^j R_{ijb}^c$},
we obtain the equation
\begin{gather}
 \pi^{kl} \nabla_k \nabla_j \mu_a \nabla_l \mu_b
+ \pi^{kl} \nabla_k \mu_a \nabla_l \nabla_j \mu_b
+ \nabla_j \pi^{kl} \nabla_k \mu_a \nabla_l \mu_b
- T_{ab}^c \nabla_j \mu_c
- S_{jab}^c \mu_c =0.
\label{identity12}
\end{gather}

By taking $\pi^{ij} \times$ \eqref{identity12},
we obtain the identity
\begin{gather}
 \pi^{ij}\bigl(\pi^{kl} \nabla_k \nabla_j \mu_a \nabla_l \mu_b
+ \pi^{kl} \nabla_k \mu_a \nabla_l \nabla_j \mu_b
+ \nabla_j \pi^{kl} \nabla_k \mu_a \nabla_l \mu_b\nonumber\\
\qquad
{}- T_{ab}^c \nabla_j \mu_c
- S_{jab}^c \mu_c\bigr) =0.
\label{identity13}
\end{gather}
Compare equations~\eqref{identity11} and~\eqref{identity13},
where note that $\nabla_j \pi^{kl} = \partial_j \pi^{kl}$
since we do not introduce a $TM$-connection on $TM$.\footnote{We have introduced a $TM$-connection on $A$, $\nabla$ and
an $A$-connection on $TM$, \smash{$\anabla = \nablabas$}.}
Consistency between equations~\eqref{identity11} and~\eqref{identity13}
gives the following identity,
$\pi^{ij} S_{jab}^c \mu_c =0$,
i.e.,
$ \pi^{\sharp} \bracket{{}^AS}{\mu} =0$.
\end{proof}

Lemma \ref{basicms} is equivalent to the following statement.
\begin{Lemma}\label{basicms2}
In a Hamiltonian Lie algebroid over a Poisson manifold,
$\bigl\langle{}^A S,\mu\bigr\rangle \in \ker\bigl(\pi^{\sharp}\bigr)$
for a~momentum section $\mu$.
\end{Lemma}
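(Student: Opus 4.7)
The plan is to observe that Lemma \ref{basicms2} is nothing more than a linear-algebraic reformulation of Lemma \ref{basicms}, so no further computation is required beyond unpacking the types of the objects involved. First I would recall that the basic curvature is a section ${}^A S \in \Omega^1(M, \wedge^2 A^* \otimes A) = \Gamma(T^*M \otimes \wedge^2 A^* \otimes A)$. Contracting the $A$-factor against $\mu \in \Gamma(A^*)$ via the canonical pairing $\bracket{-}{-}$ between $A$ and $A^*$ produces
\[
\bigl\langle{}^A S,\mu\bigr\rangle \in \Gamma\bigl(T^*M \otimes \wedge^2 A^*\bigr),
\]
i.e., a $\wedge^2 A^*$-valued $1$-form on $M$. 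In local coordinates this is the object $S_{iab}^c \mu_c$ appearing at the end of the proof of Lemma \ref{basicms}, carrying one lower $T^*M$-index $i$ and two lower $A^*$-indices $a, b$.

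Next I would apply the bundle map $\pi^{\sharp}\colon T^*M \rightarrow TM$, extended by the identity on the $\wedge^2 A^*$ factor, to the $T^*M$ slot of $\bigl\langle {}^A S,\mu\bigr\rangle$. By definition of the kernel of a vector bundle morphism, a section $\alpha$ of $T^*M \otimes \wedge^2 A^*$ satisfies $\alpha \in \ker\bigl(\pi^{\sharp}\bigr)$ precisely when $\pi^{\sharp}(\alpha) = 0$ as a section of $TM \otimes \wedge^2 A^*$ (equivalently, when $\pi^{\sharp}(\alpha(e_1, e_2)) = 0$ for every $e_1, e_2 \in \Gamma(A)$). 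Applying this directly with $\alpha = \bigl\langle {}^A S,\mu\bigr\rangle$ shows that the conclusion of Lemma \ref{basicms2} is literally the same equation as $\pi^{\sharp}\bigl\langle {}^A S,\mu\bigr\rangle = 0$, which is the content of Lemma \ref{basicms}.

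I do not anticipate any obstacle. The only point requiring a moment of care is to confirm that the pairing $\bracket{-}{-}$ contracts the $A$-factor of ${}^A S$ with $\mu$ (rather than acting on $T^*M$ or on $\wedge^2 A^*$), which is fixed by the fact that $\bracket{-}{-}$ denotes the duality pairing between $A$ and $A^*$ throughout the paper. Thus the proof consists of a single sentence stating the equivalence, with a reference back to Lemma \ref{basicms}.
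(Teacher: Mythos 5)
Your proposal is correct and matches the paper exactly: the paper offers no separate proof of Lemma \ref{basicms2}, merely stating that it is equivalent to Lemma \ref{basicms}, which is precisely the type-unpacking observation you make. Your additional care about which slot the pairing contracts and how $\pi^{\sharp}$ acts on the $T^*M$ factor is a faithful elaboration of the same one-line argument.
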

Now, we assume the equation
\begin{gather}
\bigl\langle{}^A S,\mu\bigr\rangle(e_1, e_2)
= 0
\label{basiccurmu}
\end{gather}
for every $e_1, e_2 \in \Gamma(A)$.
In the local coordinate, equation~\eqref{basiccurmu} is
\begin{gather}
S_{jab}^c \mu_c =0.
\label{basiccurmu2}
\end{gather}
Note that the left-hand side of equation~\eqref{basiccurmu} is
the commutator of the connection and the $A$-connection on $\mu$,
\[
\bigl\langle\iota_v {}^A S(e_1, e_2),\mu\bigr\rangle =
\frac{1}{2} \bigl(- \bigl(\bigl[\nabla_{v}, \anabla_{e_1}\bigr] \mu\bigr)(e_2)
+ \bigl(\bigl[\nabla_{v}, \anabla_{e_2}\bigr] \mu\bigr)(e_1)\bigr).
\]
Therefore, equation~\eqref{basiccurmu} is regarded as
$\nabla_{v}$ and $\anabla$ are commutative on the momentum section~$\mu$.

The equation \eqref{basiccurmu} is satisfied if $\ker\bigl(\pi^{\sharp}\bigr)= 0$, or
sections $e_1, e_2 \in \Gamma(A)$ are restricted to the subspace of $\Gamma(A)$
satisfying equation~\eqref{basiccurmu}.
If $\pi$ is nondegenerate, $\omega = \pi^{-1}$ is symplectic and
$\ker\bigl(\pi^{\sharp}\bigr)= 0$ is always satisfied.
However, we do not specify geometry of equation~\eqref{basiccurmu} to
symplectic in this paper.

Substituting equation~\eqref{basiccurmu2} into equation~\eqref{identity13},
the following identity is obtained:
\begin{gather}
 \pi^{kl} \nabla_k \nabla_j \mu_a \nabla_l \mu_b
+ \pi^{kl} \nabla_k \mu_a \nabla_l \nabla_j \mu_b
+ \nabla_j \pi^{kl} \nabla_k \mu_a \nabla_l \mu_b
- T_{ab}^c \nabla_j \mu_c =0.
\label{identity22}
\end{gather}
We consider a coordinate independent equation of equation~\eqref{identity22}.
For it, we consider
the Koszul bracket $[-,-]_{\pi}\colon\Omega^1(M) \times \Omega^1(M)
\rightarrow \Omega^1(M)$,
\begin{gather}
[\alpha, \beta]_{\pi} =
\calL_{\pi^{\sharp} \alpha} \beta - \calL_{\pi^{\sharp} \beta} \alpha
- \rd (\pi(\alpha, \beta)).
\label{Koszulbracket}
\end{gather}
The local coordinate expression of this bracket $[-, -]_{\pi}$ is
\[
([\alpha, \beta]_{\pi})_j
=
\pi^{kl} \partial_k \alpha_j \beta_l
+ \pi^{kl} \alpha_k \partial_l \beta_j
+ \partial_j \pi^{kl} \alpha_k \beta_l
\]
for $1$-form $\alpha = \alpha_i(x) \rd x^i$ and $\beta = \beta_i(x) \rd x^i$.
Using this bracket, equation~\eqref{identity22} is
the following coordinate independent equation:
\[
\iota_v ([(\nabla \mu)(e_1), (\nabla \mu)(e_2)]_{\pi}{})
+ (\nabla_{v} \mu)([e_1, e_2])
=0
\]
or
\begin{gather}
- [(\nabla \mu)(e_1), (\nabla \mu)(e_2)]_{\pi}
= (\nabla \mu)([e_1, e_2]).
\label{identity34}
\end{gather}
Therefore, if we define the map $(\nabla \mu)^*\colon \Gamma(A) \rightarrow \Gamma(T^*M)$ induced from $\nabla \mu$ as
$(\nabla \mu)^*(e) := \bracket{\nabla \mu}{e}$ for every $e \in \Gamma(A)$,
$-(\nabla \mu)^*$ is a Lie algebra morphism from $\Gamma(A)$ to $\Gamma(T^*M)$.
Similar to $\mu^*$,
since $\nabla \mu$ is a $1$-form taking value on $A^*$,
$\nabla \mu \in \Omega^1(M, A^*)$ and $(\nabla \mu)^*$ is defined from
the pairing of $A$ and $A^*$, $(\nabla \mu)^*$ maps a vector
$a(x) \in A_x$ on the fiber of a point $x \in M$ to a covector
$\alpha(x) \in T^*_x M$. Thus $(\nabla \mu)^*$ is regarded as
the bundle map, $(\nabla \mu)^*\colon A \rightarrow T^*M$.

The condition (P2), equation~\eqref{PMS2},
$- \pi^{\sharp} (\nabla \mu)(e) = \rhoa(e)$
is nothing but the condition for anchor maps in the Lie algebroid morphism
$-(\nabla \mu)^*\colon A \rightarrow TM$,
where the anchor map in $A$ is~$\rhoa$ and one in $T^*M$ is $\pi^{\sharp}$.
Therefore, we have proved that $(\nabla \mu)^*$ is a Lie algebroid morphism.

We summarize analysis in this subsection.
If $M$ is a symplectic manifold, $\bigl\langle{}^A S,\mu\bigr\rangle =0$ is satisfied and we obtain the following proposition.\footnote{For a momentum section $\mu \in \Gamma(A^*)$, notation $\mu(e)$ in fact means that the pairing of $A^*$ and $A$, $\bracket{\mu}{e}$. After this section, if $\mu$ is regarded as the map $\mu^*\colon\Gamma(A) \rightarrow C^{\infty}(M)$ to consider a morphism, it is denoted by $\mu^*$ to emphasize it, which is a \textit{comomentum} defined by $\mu^*(e) = \bracket{\mu}{e}$.}
\begin{Proposition}\label{nablamuLAmorphismS}
Let $(M, \pi, A, \mu)$ be a Hamiltonian Lie algebroid over a symplectic manifold.
Then $-(\nabla \mu)^*\colon A \rightarrow T^*M$ is a Lie algebroid morphism.
\end{Proposition}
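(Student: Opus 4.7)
The plan is to assemble the proposition from the machinery already built up in the excerpt. Recall that the Lie algebroid structure on $T^*M$ induced by the Poisson bivector $\pi$ (Example \ref{Poisson}) has anchor $-\pi^{\sharp}$ and bracket the Koszul bracket \eqref{Koszulbracket}. So to prove that $-(\nabla\mu)^*\colon A\to T^*M$ is a Lie algebroid morphism, I need to verify two things:
\begin{itemize}
\item the anchor compatibility $(-\pi^{\sharp})\circ(-(\nabla\mu)^*)=\rhoa$;
\item the bracket compatibility $-(\nabla\mu)^*([e_1,e_2])=[-(\nabla\mu)^*(e_1),-(\nabla\mu)^*(e_2)]_{\pi}$.
\end{itemize}

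The first condition is immediate: it is simply a rewriting of axiom (P2), namely $\rhoa(e)=-\pi^{\sharp}((\nabla\mu)(e))$, which follows from the Hamiltonian hypothesis. So the real content lies in the bracket compatibility.

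For the bracket compatibility, I would invoke the key identity \eqref{identity34}, which was essentially derived in the discussion preceding the proposition, namely
$$-[(\nabla\mu)(e_1),(\nabla\mu)(e_2)]_{\pi}=(\nabla\mu)([e_1,e_2]).$$
This identity, however, was established under the assumption \eqref{basiccurmu}, that $\langle{}^A S,\mu\rangle=0$. Hence the central step is to verify that this vanishing condition holds automatically when $M$ is symplectic. This is where Lemma \ref{basicms} comes in: it states $\pi^{\sharp}\langle{}^A S,\mu\rangle=0$. Since $\pi$ is nondegenerate (symplectic case), $\pi^{\sharp}$ is a bundle isomorphism, hence injective, so $\langle{}^A S,\mu\rangle=0$ follows.

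With \eqref{basiccurmu} established in the symplectic setting, the identity \eqref{identity34} holds for all $e_1,e_2\in\Gamma(A)$, which after multiplying by $(-1)$ on both sides is exactly the bracket compatibility of $-(\nabla\mu)^*$ with respect to the Koszul bracket. Together with the anchor compatibility from (P2), this yields the claim. I anticipate no serious obstacle: the only slightly subtle point is the logical role of nondegeneracy of $\pi$, which is precisely what upgrades the conclusion of Lemma \ref{basicms} from $\langle{}^A S,\mu\rangle\in\ker(\pi^{\sharp})$ to $\langle{}^A S,\mu\rangle=0$. All local computations needed to pass from \eqref{basiccurmu2} to \eqref{identity22} and then to the coordinate-free identity \eqref{identity34} are already carried out in the proof of Lemma \ref{basicms} and the ensuing discussion, so the proof of the proposition is essentially a one-line deduction from the preceding material.
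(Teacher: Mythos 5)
Your proof is correct and follows essentially the same route as the paper: Lemma \ref{basicms} together with the injectivity of $\pi^{\sharp}$ in the symplectic case yields $\bigl\langle{}^A S,\mu\bigr\rangle=0$, whence equation~\eqref{identity34} gives the bracket compatibility with the Koszul bracket and condition (P2) gives the anchor compatibility. The only wrinkle is a sign-convention ambiguity in the anchor of the cotangent Lie algebroid ($-\pi^{\sharp}$ in Example~\ref{Poisson} versus $\pi^{\sharp}$ in the discussion preceding the proposition), which the paper itself leaves unresolved, so your anchor identity should be read with whichever convention makes it agree with equation~\eqref{PMS2}.
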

For a Poisson manifold $M$, the corresponding proposition is obtained.
\begin{Proposition}\label{nablamuLAmorphism}
Let $(M, \pi, A, \mu)$ be a Hamiltonian Lie algebroid over a Poisson manifold.
Then if $\bigl\langle{}^A S,\mu\bigr\rangle =0$, $-(\nabla \mu)^*\colon A \rightarrow T^*M$ is a Lie algebroid morphism.
\end{Proposition}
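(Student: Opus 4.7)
The plan is to verify the two defining conditions of a Lie algebroid morphism between $A$ and $T^*M$ (viewed as the Poisson Lie algebroid of Example \ref{Poisson} with anchor $-\pi^{\sharp}$ and Koszul bracket $[-,-]_{\pi}$) separately. The anchor-compatibility condition \eqref{LAmorphism02} is essentially free: the map $-(\nabla\mu)^*\colon A \to T^*M$ composed with the $T^*M$-anchor $-\pi^{\sharp}$ gives $\pi^{\sharp}(\nabla\mu)(e)$, and the defining equation (P2), namely \eqref{PMS2}, states precisely that this equals $-\rho_A(e)$. So after noting this, all the work lies in the bracket-preservation identity
\[
-[(\nabla\mu)(e_1),(\nabla\mu)(e_2)]_{\pi} = (\nabla\mu)([e_1,e_2]).
\]

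To prove this bracket identity, I would assemble the two independent covariant identities already extracted earlier in Section \ref{sec3}. The first, equation \eqref{identity11}, comes from substituting (P2) into the Lie algebroid identity \eqref{identity001} for the anchor, and using that $\pi$ is Poisson. The second, equation \eqref{identity12}, is obtained by differentiating the covariant form \eqref{P3covlocal} of the bracket-compatibility condition (P3) once by $\nabla_j$ and recognizing the resulting torsion-plus-curvature combination as the local coordinate expression \eqref{lcbasiccurv} of the basic curvature $\baS$. Contracting \eqref{identity12} with $\pi^{ij}$ and comparing with \eqref{identity11} produces an expression whose only nonzero piece is $\pi^{ij} S^c_{jab}\mu_c$; this was the route to Lemma \ref{basicms}.

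Under the present hypothesis $\bigl\langle{}^A S,\mu\bigr\rangle = 0$, i.e., the local relation $S^c_{jab}\mu_c = 0$ of \eqref{basiccurmu2}, the $\mu$-term drops out of equation \eqref{identity12} itself, not just after a $\pi^{ij}$ contraction, leaving the stronger identity \eqref{identity22}. This identity is precisely the local form of \eqref{identity34}: the left-hand side reassembles into the Koszul bracket $[(\nabla\mu)(e_1),(\nabla\mu)(e_2)]_{\pi}$ via the coordinate expression of \eqref{Koszulbracket}, while the torsion term $T^c_{ab}\nabla_j\mu_c$ combines with the structure constants (using \eqref{lcAtosion} and another application of (P2) to replace $\rho$ by $-\pi^{\sharp}\nabla\mu$) to reproduce $(\nabla\mu)([e_1,e_2])$. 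This yields exactly the morphism property for brackets.

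The main obstacle, and essentially the only one, is the cancellation that makes the basic curvature appear in \eqref{identity12}: one must recognize that differentiating (P3) once produces the very combination $\nabla_i T^c_{ab} + \rho^j_b R^c_{ija} - \rho^j_a R^c_{ijb}$ that defines $\baS$. Everything else is bookkeeping: comparing two covariant identities, invoking the hypothesis to kill $\langle{}^A S,\mu\rangle$, and then recognizing the remaining expression as the Koszul bracket. Since Lemma \ref{basicms} already carried out this bookkeeping up to the contraction with $\pi^{ij}$, the present proposition follows by re-running the same argument without that final contraction, which is legitimate precisely because $\langle{}^A S,\mu\rangle$ is now assumed to vanish identically rather than only modulo $\ker\pi^{\sharp}$.
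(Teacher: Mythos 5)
Your proposal is correct and follows essentially the same route as the paper: the anchor condition is read off from (P2), and the bracket condition is obtained by differentiating the covariant form \eqref{P3covlocal} of (P3) to produce the basic-curvature identity \eqref{identity12}, killing the $S^c_{jab}\mu_c$ term by the hypothesis \eqref{basiccurmu2} to get \eqref{identity22}, and recognizing that as the local form of the Koszul-bracket identity \eqref{identity34}. You also correctly identify the one point where care is needed, namely that the hypothesis lets one drop the contraction with $\pi^{ij}$ that was required for Lemma \ref{basicms}.
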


\section{Comomentum sections and Lie algebroid morphisms}\label{sec:CMSLAM}
We analyze the comomentum description of momentum sections and Hamiltonian Lie algebroids. The comomentum section gives a Lie algebroid morphism from $A$ to a proper vector bundle on~$M$.

\subsection{Pre-symplectic case}\label{presymcom}
Let $(M, \omega)$ be a pre-symplectic manifold.
We define the following bracket on
$\mathfrak{X}(M) \oplus C^{\infty}(M)$,
\begin{gather}
\liebra{u + f}{v + g}
= [u, v] + u g - v f - \iota_u \iota_v \omega,
\label{Lieline}
\end{gather}
where $u, v \in \mathfrak{X}(M)$ and $f, g \in C^{\infty}(M)$.
The bracket \eqref{Lieline} is the $\bR$-bilinear Lie bracket
if and only if $\rd \omega = 0$.
Moreover, we define the map,
$\rhotr\colon \mathfrak{X}(M) \oplus C^{\infty}(M)
\rightarrow \mathfrak{X}(M)$,
\begin{gather}
\rhotr(u + f) = u.
\label{Lieanchor}
\end{gather}
The space for $\mathfrak{X}(M) \oplus C^{\infty}(M)$ is denoted by
$TM \oplus \bR$.
An element $u + f \in \mathfrak{X}(M) \oplus C^{\infty}(M)$ is regarded as a
section on the bundle $TM \oplus \bR$.
Then the map \eqref{Lieanchor} is induced from the bundle map
$\rhotr\colon TM \oplus \bR \rightarrow TM$.
Two operations \eqref{Lieline} and \eqref{Lieanchor}
define a Lie algebroid on~${TM \oplus \bR}$.
The Lie algebroid $(TM \oplus \bR, \liebra{-}{-}, \rhotr)$ is
 called the Almeida--Molino Lie algebroid~\cite{Almedia}.

Let $(A, \rhoa, [-,-]_A)$ be a Lie algebroid over $M$.
For a section $\mu \in \Gamma(A^*)$,
a map $\mu^*\colon\Gamma(A) \rightarrow C^{\infty}(M)$
is defined by $\bracket{\mu}{e} = \mu^*(e)$.
Since $\mu$ is a section on $A^*$ and $\mu^*$ is defined from
the pairing of $A$ and $A^*$, $\mu^*$ maps a vector
$a(x) \in A_x$ on the fiber of a point $x \in M$ to a value of a~function
in $x \in M$, $f(x) \in \bR$.
Thus, $\mu^*$ is regarded as
the bundle map $\mu^*\colon A \rightarrow M \times \bR$ that
is the identity map on $M$.
Then $\rhoa + \mu^*$ is a map
$\rhoa + \mu^*\colon \Gamma(A) \rightarrow \mathfrak{X}(M) \oplus C^{\infty}(M)$,
which is induced from the bundle map
$\rhoa + \mu^*\colon A \rightarrow TM \oplus \bR$.
We obtain the following proposition to characterize the condition of a momentum section (S3) as a Lie algebroid morphism.
\begin{Proposition}\label{SympS3}
$\rhoa + \mu^*$ is a Lie algebroid morphism from $A$ to $TM \oplus \bR$
if and only if $\mu$
satisfies the condition $\rm (S3)$, i.e., equation~\eqref{HH3}.
\end{Proposition}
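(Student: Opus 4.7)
The plan is to verify the two defining conditions of a Lie algebroid morphism for the map $\rhoa + \mu^* \colon A \to TM \oplus \bR$, noting that the anchor condition is automatic and that the bracket condition reduces, after a short calculation, exactly to \eqref{HH3}.

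For the anchor condition \eqref{LAmorphism02} I would simply observe that by the definition \eqref{Lieanchor} of $\rhotr$, one has $\rhotr(\rhoa(e) + \mu^*(e)) = \rhoa(e)$, so $\rhotr \circ (\rhoa + \mu^*) = \rhoa$ holds with no hypothesis on $\mu$.

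For the bracket condition I would expand both sides. The left-hand side is $(\rhoa + \mu^*)([e_1, e_2]_A) = \rhoa([e_1, e_2]_A) + \mu^*([e_1, e_2]_A)$, while the right-hand side, expanded via the Almeida--Molino bracket \eqref{Lieline}, equals $[\rhoa(e_1), \rhoa(e_2)] + \rhoa(e_1)\mu^*(e_2) - \rhoa(e_2)\mu^*(e_1) - \iota_{\rhoa(e_1)}\iota_{\rhoa(e_2)}\omega$. The vector-field components match automatically because the anchor of a Lie algebroid is already a Lie algebra homomorphism on sections, as recorded in \eqref{identity001}. Equating the scalar components and using the explicit formula for the Lie algebroid differential to rewrite $\rhoa(e_1)\mu^*(e_2) - \rhoa(e_2)\mu^*(e_1) - \mu^*([e_1, e_2]_A)$ as $({}^A\rd\mu)(e_1, e_2)$, the remaining condition becomes $({}^A\rd\mu)(e_1, e_2) = \omega(\rhoa(e_1), \rhoa(e_2))$, which is precisely \eqref{HH3}. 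Every step in this manipulation is reversible, so both directions of the biconditional follow at once.

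I do not expect any substantive obstacle: the content of the statement is really that the correction term $-\iota_u\iota_v\omega$ built into the Almeida--Molino bracket on $TM \oplus \bR$ is tuned precisely so that the cocycle-type identity (S3) becomes equivalent to bracket preservation. The only care required is bookkeeping on signs and pairing conventions between $\iota_u\iota_v\omega$ and $\omega(\rhoa(e_1), \rhoa(e_2))$, so that the ${}^A\rd\mu$ term assembles with the correct sign.
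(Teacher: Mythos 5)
Your proposal is correct and follows essentially the same route as the paper: split the bracket condition into its $TM$ and $\bR$ components, dispose of the $TM$ part via the anchor identity \eqref{identity001}, identify the $\bR$ part with \eqref{HH3} through the formula for ${}^A\rd\mu$, and note that the anchor condition \eqref{LAmorphism02} holds trivially. The only point deserving the care you already flag is the ordering convention in $\iota_u\iota_v\omega$ versus $\omega(\rhoa(e_1),\rhoa(e_2))$, which the paper handles the same way.
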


\begin{proof}
We prove the equation
\begin{gather}\label{SympS301}
\liebra{(\rhoa + \mu^*)(e_1)}{(\rhoa + \mu^*)(e_2)} =
(\rhoa + \mu^*)([e_1, e_2]_A)
\end{gather}
for every $e_1, e_2 \in \Gamma(A)$.
The identity
$[\rhoa(e_1), \rhoa(e_2)]_{TM} = \rhoa([e_1, e_2]_A)$,
is satisfied
since $\rhoa$ is a Lie algebroid morphism from $A$ to $TM$
from the definition of a Lie algebroid.
It is nothing but the $\rhoa([e_1, e_2]_A)$ part in equation~\eqref{SympS301}.
Since the Lie algebroid differential ${}^A \rd$ is
\[
{}^A \rd \mu (e_1, e_2)
=
\calL_{\rhoa(e_1)} (\mu(e_2)) - \calL_{\rhoa(e_2)} (\mu(e_1))
- \mu([e_1, e_2]_A),
\]
the condition (S3) is equivalent to
\begin{gather}
\calL_{\rhoa(e_1)} (\mu(e_2)) - \calL_{\rhoa(e_2)} (\mu(e_1))
+ \iota_{\rhoa(e_2)} \iota_{\rhoa(e_1)} \omega
= \mu([e_1, e_2]_A),
\label{LAmorphism01}
\end{gather}
which is the $\mu^*([e_1, e_2])$ part in equation~\eqref{SympS301}.
Therefore, \eqref{SympS301} is proved, which is the condition~\eqref{LAmorphism01} for a Lie bracket
in the definition of a Lie algebroid morphism.
The second condition~\eqref{LAmorphism02} for the anchor map,
\begin{gather}
\rhoa (e) =\rhotr \circ (\rhoa + \mu^*)(e),
\label{LAmorphism02+}
\end{gather}
is easily checked.
Equations~\eqref{SympS301} and \eqref{LAmorphism02+} show that $\rhoa+\mu^*$ is a Lie algebroid morphism from~$A$ to $TM \oplus \bR$.
\end{proof}

Next, we characterize the condition (S2) in terms of a Lie algebroid morphism.

For a section $\mu \in \Gamma(A^*)$, $\nabla \mu \in \Omega^1(M, A^*)$
induces a map $(\nabla_X \mu)^*\colon \Gamma(A) \rightarrow \Omega^1(M)$
by defining $\bracket{\nabla_X \mu}{e} = (\nabla_X \mu)^*(e)$
for every $e \in \Gamma(A)$ and $X \in \mathfrak{X}(M)$.
Since $(\nabla_X \mu)^*$ is defined by the pairing between $A$ and $A^*$
and $(\nabla_X \mu)^*|_M = \mathrm{id}$,
$(\nabla_X \mu)^*$ is considered as a bundle map~${(\nabla \mu)^*\colon A \rightarrow T^*M}$.
We consider the map
$\rhoa - (\nabla \mu)^*\colon A \rightarrow TM \oplus T^*M$.

$TM \oplus T^*M$ naturally has a standard Courant algebroid structure
with $H=0$ in Example~\ref{standardCA}.
The pre-symplectic structure on $M$ induces the Dirac structure $L_{\omega}$.
Then the following proposition is obtained.
\begin{Proposition}\label{SympS2}
The map $\rhoa - (\nabla \mu)^*\colon A \rightarrow TM \oplus T^*M$
is a Lie algebroid morphism ${\rhoa - (\nabla \mu)^*\colon A \rightarrow L_{\omega}}$
if and only if $\mu$
satisfies the condition $\rm (S2)$, i.e., equation~\eqref{HH2}.
\end{Proposition}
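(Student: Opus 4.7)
The plan is to verify the proposition in two stages: first, to show that the image of $\rhoa - (\nabla \mu)^*$ lies in the subbundle $L_{\omega}$ if and only if the momentum section condition $\rm (S2)$ holds; and second, to check that under this hypothesis the resulting bundle map $A \to L_{\omega}$ is automatically a Lie algebroid morphism.

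For the first stage, I would unpack the definition of $L_{\omega}$ from Example \ref{symplecticdirac}: an element $X + \alpha \in TM \oplus T^*M$ lies in $L_{\omega}$ exactly when $\alpha = \omega^{\flat}(X) = \iota_X \omega$. Applying this to the image $\rhoa(e) + \bigl(-(\nabla \mu)^*(e)\bigr)$ forces $-(\nabla \mu)^*(e) = \iota_{\rhoa(e)} \omega$, which is equation \eqref{HH2}. This immediately yields both the ``only if'' direction and the image-containment needed for the ``if'' direction.

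For the second stage, I would check the two axioms in the definition of a Lie algebroid morphism. Anchor compatibility \eqref{LAmorphism02} is immediate, since the anchor on $L_{\omega}$ inherited from the standard Courant algebroid is just the projection $\rhott(u + \alpha) = u$, so that $\rhott \circ (\rhoa - (\nabla \mu)^*)(e) = \rhoa(e)$. For bracket compatibility, I would exploit the Dorfman bracket formula stated in Example \ref{symplecticdirac}, namely $\bigl[u + \omega^{\flat}(u), v + \omega^{\flat}(v)\bigr]_D = [u, v] + \omega^{\flat}([u, v])$, which holds because $\rd \omega = 0$. Substituting $u = \rhoa(e_1)$, $v = \rhoa(e_2)$, applying the Lie algebroid identity $[\rhoa(e_1), \rhoa(e_2)] = \rhoa([e_1, e_2]_A)$, and then using $\rm (S2)$ for $[e_1, e_2]_A$ to rewrite $\omega^{\flat}(\rhoa([e_1, e_2]_A)) = -(\nabla \mu)^*([e_1, e_2]_A)$, should deliver the desired identity.

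The argument is essentially formal once the Dirac structure identity of Example \ref{symplecticdirac} is in hand; the only delicate point is keeping the signs consistent and recognizing that $\rm (S2)$ plays a double role, both as the factorization condition through $L_{\omega}$ and as the means of closing the bracket computation. In contrast to the Poisson analogue developed in Section \ref{sec3}, no auxiliary obstruction such as $\bigl\langle {}^A S, \mu \bigr\rangle = 0$ is required here, since the pre-symplectic assumption $\rd \omega = 0$ alone suffices to guarantee the involutivity of $L_{\omega}$.
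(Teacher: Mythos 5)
Your proposal is correct and follows essentially the same route as the paper: the condition (S2) is precisely the statement that the image of $\rhoa - (\nabla \mu)^*$ lies in the graph $L_{\omega}$, and the bracket compatibility then follows from the Dorfman bracket computation of Example \ref{symplecticdirac} using $\rd\omega = 0$ together with $[\rhoa(e_1),\rhoa(e_2)] = \rhoa([e_1,e_2]_A)$ and (S2) applied to $[e_1,e_2]_A$. Your observation that membership in $L_{\omega}$ is immediate pointwise from the graph description is a slightly cleaner packaging of what the paper verifies via the isotropy and closure computations, but it is not a different argument.
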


\begin{proof}
Assume the condition (S2). For $e_1, e_2 \in \Gamma(A)$,
using equation~\eqref{HH2}, (S2), we obtain the inner product
\[
\bracket{(\rhoa - (\nabla \mu)^*)(e_1)}{(\rhoa - (\nabla \mu)^*)(e_2)}
= - \omega(\rho(e_1), \rho(e_2)) - \omega(\rho(e_2), \rho(e_1)) =0,
\]
and the Dorfman bracket
\begin{gather*}
 \courant{(\rhoa - (\nabla \mu)^*)(e_1)}{(\rhoa - (\nabla \mu)^*)(e_2)}\\
\qquad= [\rhoa(e_1), \rhoa(e_2)]_{TM}
+ \iota_{[\rhoa(e_1), \rhoa(e_2)]_{TM}}\omega
- \iota_{\rhoa(e_2)} \iota_{\rhoa(e_1)} \rd \omega
 \\
\qquad= \rhoa([e_1, e_2]_A)
+ \iota_{\rhoa([e_1, e_2]_A)}\omega
 \\
\qquad= \rhoa([e_1, e_2]_A) - (\nabla \mu)^*([e_1, e_2]_A),
\end{gather*}
which show $(\rhoa - (\nabla \mu)^*)(e) \in L_{\omega}$
since $\rd \omega =0$.
The condition \eqref{LAmorphism02} for anchor maps are obvious.
Therefore, $\rho - (\nabla \mu)^*$ is a Lie algebroid morphism.
If we consider the reverse, the condition (S2) is obtained from the Lie algebroid morphism condition.
\end{proof}

By combining two Propositions \ref{SympS3} and \ref{SympS2}, we obtain a characterization of the momentum section based on Lie algebroid morphisms.
\begin{Theorem}
Let $(M, \omega)$ be a pre-symplectic manifold and $(A, \rhoa, [-,-]_A)$ be a Lie algebroid over $M$.
$\mu \in \Gamma(A^*)$ is a bracket-compatible momentum section
if and only if $\rhoa + \mu^*$ is a Lie algebroid morphism from
$A$ to $TM \oplus \bR$ and
$\rhoa - (\nabla \mu)^*$ is a Lie algebroid morphism
from $A$ to~$L_{\omega}$.
\end{Theorem}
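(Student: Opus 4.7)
The statement is a direct conjunction of the two preceding Propositions \ref{SympS3} and \ref{SympS2}, so my plan is to simply combine them. Recall that ``$\mu$ is a bracket-compatible momentum section'' means that $\mu \in \Gamma(A^*)$ satisfies both condition (S2), equation~\eqref{HH2}, and condition (S3), equation~\eqref{HH3}; the anchoring condition (S1) on $\omega$ does not appear in this statement.

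For the ($\Rightarrow$) direction, I would assume (S2) and (S3) hold for $\mu$. Applying Proposition \ref{SympS3} to condition (S3) immediately gives that $\rhoa + \mu^* \colon A \to TM \oplus \bR$ is a Lie algebroid morphism to the Almeida--Molino Lie algebroid. Applying Proposition \ref{SympS2} to condition (S2) gives that $\rhoa - (\nabla \mu)^* \colon A \to TM \oplus T^*M$ factors through $L_\omega$ and is a Lie algebroid morphism $A \to L_\omega$.

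For the ($\Leftarrow$) direction, I would run the reverse implications. If $\rhoa + \mu^*$ is a Lie algebroid morphism into $TM \oplus \bR$, then Proposition \ref{SympS3} forces (S3); if $\rhoa - (\nabla \mu)^*$ lands inside $L_\omega$ and is a Lie algebroid morphism, Proposition \ref{SympS2} forces (S2). Together (S2) and (S3) are exactly the definition of a bracket-compatible momentum section, completing the equivalence.

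Since both halves are already proved as lemmas, there is no genuine obstacle here; the proof is a matter of bookkeeping. The only care needed is to state clearly that (S1) is not being claimed, so that ``bracket-compatible momentum section'' is correctly parsed as ``$\mu$ satisfying (S2) and (S3)'' and not as ``$\mu$ making $(A,\nabla,\mu)$ Hamiltonian in the sense of Definition \ref{HamiltonianLAS}''. I would therefore open the proof with a one-line remark on this point and then write ``The claim follows by combining Propositions \ref{SympS3} and \ref{SympS2}.''
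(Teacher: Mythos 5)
Your proposal matches the paper exactly: the theorem is stated there as an immediate consequence of combining Propositions \ref{SympS3} (equivalence of (S3) with $\rhoa+\mu^*$ being a morphism to $TM\oplus\bR$) and \ref{SympS2} (equivalence of (S2) with $\rhoa-(\nabla\mu)^*$ being a morphism to $L_\omega$), with no further argument. Your added remark that (S1) is not part of the claim is a correct and harmless clarification.
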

The map $\rhoa - (\nabla \mu)^*$ can be considered as a Lie algebroid morphism from $A$ to $TM$ since $L_{\omega} \simeq TM$.

We define a bracket-compatible comomentum section.
\begin{Definition}[comomentum sections over pre-symplectic manifolds]\label{comomSymp}
Let $(M, \omega)$ be a pre-symplectic manifold
and $(A, [-,-], \rhoa)$ be a Lie algebroid over $M$.
$\mu^*\colon A \rightarrow M \times \bR$ is called a~bracket-compatible
if $\rhoa + \mu^*$ is a Lie algebroid morphism from
$A$ to $TM \oplus \bR$.
$\mu^*$ is called a~bracket-compatible comomentum section if in addition,
$\rhoa - (\nabla \mu)^*$ is a Lie algebroid morphism
from $A$ to $L_{\omega}$.
\end{Definition}

\subsection{Poisson case}\label{PoisLAM}

Let $(M, \pi)$ be a Poisson manifold.
We consider the following bilinear bracket on
$\Omega^1(M) \oplus C^{\infty}(M)$:
\begin{gather}
[\alpha + f, \beta + g]_{T^*M \oplus \bR} =
[\alpha, \beta]_{\pi} - \pi^{\sharp}(\alpha) g
+ \pi^{\sharp}(\beta) f - \pi(\alpha, \beta).
\label{PoissonTMR}
\end{gather}
where $\alpha, \beta \in \Omega^1(M)$ and $f, g \in C^{\infty}(M)$.
$[-,-]_{\pi}$ is the Koszul bracket defined in equation~\eqref{Koszulbracket}.
The bracket \eqref{PoissonTMR} is a Lie bracket if and only if $\pi$ is a Poisson bivector field.
If $\alpha = \rd h$ and~${\beta = \rd k}$ with $h, k \in C^{\infty}(M)$,
the equation \eqref{PoissonTMR} reduces to
\[
[\rd h + f, \rd k + g]_{T^*M \oplus \bR} =
\rd (\sbv{h}{k}_{M}) - \sbv{f}{g}_{M},
\]
where $\sbv{f}{g}_{M} = \pi(\rd f, \rd g)$ is a Poisson bracket on $C^{\infty}(M)$ defined by $\pi$.

We define a bundle map,
$\rhotr\colon T^*M \oplus \bR \rightarrow TM$ as
\begin{gather}
\rhotr(\alpha + f) = \pi^{\sharp} (\alpha).
\label{Lieanchor02}
\end{gather}
Operations \eqref{PoissonTMR} and \eqref{Lieanchor02} give a Lie algebroid
structure on $T^*M \oplus \bR$.
The Lie algebroid $(T^*M \oplus \bR, \rhotr, [-,-]_{T^*M \oplus \bR})$
is a Poisson analogue of the Almeida--Molino Lie algebroid in Section
\ref{presymcom}.

Similar to Section \ref{presymcom}, we consider a map
$\mu^*\colon\Gamma(A) \rightarrow C^{\infty}(M)$ on
a Lie algebroid $A$ over~$M$.
Then $(\nabla \mu)^*$ is a map
$(\nabla \mu)^*\colon \Gamma(A) \rightarrow \Omega^1(M)$
and $\bigl({}^A \rd \mu\bigr)^*$ is a map
$\bigl({}^A \rd \mu\bigr)^*\colon \Gamma(A) \times \Gamma(A) \rightarrow C^{\infty}(M)$.

A \textit{comomentum section} on a Poisson manifold is defined from
equations~\eqref{PMS2} and \eqref{PMS3} as follows.
\begin{Definition}[comomentum sections over Poisson manifolds]\label{comomPois}
Let $(M, \pi)$ be a Poisson manifold with a Poisson bivector field $\pi$
and $(A, [-,-], \rhoa)$ be a Lie algebroid over $M$.

A map $\mu^*\colon \Gamma(A) \rightarrow C^{\infty}(M)$ is called a bracket-compatible $\nabla$-comomentum section if it satisfies
\begin{gather}
{\rm (PC2)} \quad
 \rhoa(e) = - \pi^{\sharp} ((\nabla \mu)^*(e)),
\label{PCMS2}
\\
{\rm (PC3)} \quad \bigl(\bigl({}^A \rd \mu\bigr)^*\bigr) (e_1, e_2) = - \pi((\nabla \mu)^*)(e_1), ((\nabla \mu)^*)(e_2))
\label{PCMS3}
\end{gather}
for $e, e_1, e_2 \in \Gamma(A)$.
\end{Definition}

We give characterizations similar to Propositions \ref{SympS2} and \ref{SympS3}
over a pre-symplectic manifold.

Consider the map
$-(\nabla \mu)^* + \mu^*\colon \Gamma(A) \rightarrow \Omega^1(M) \oplus C^{\infty}(M)$,
which can be regarded as the bundle map,
$- (\nabla \mu)^* + \mu^*\colon A \rightarrow T^*M \oplus \bR$.
Then we obtain the following proposition.
\begin{Proposition}\label{PoisP3}
Let $\mu$ be a momentum section satisfying equations~\eqref{PCMS2} and \eqref{PCMS3}. Equivalently, let $\mu^*$ be a comomentum section.
Assume $\bigl\langle {}^A S,\mu\bigr\rangle = 0$ in equation~\eqref{basiccurmu}.
Then $-(\nabla \mu)^* + \mu^*$ is a Lie algebroid morphism
from $A$ to $T^*M \oplus \bR$.
\end{Proposition}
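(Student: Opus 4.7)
The plan is to verify, for the bundle map $\phi := -(\nabla\mu)^* + \mu^* \colon A \to T^*M \oplus \bR$, the two defining conditions of a Lie algebroid morphism: anchor compatibility $\rhotr \circ \phi = \rhoa$ and bracket compatibility $\phi([e_1, e_2]_A) = [\phi(e_1), \phi(e_2)]_{T^*M \oplus \bR}$. The anchor condition will be immediate from hypothesis (PC2): by \eqref{Lieanchor02}, $\rhotr(\phi(e)) = \pi^\sharp(-(\nabla\mu)^*(e)) = -\pi^\sharp((\nabla\mu)^*(e)) = \rhoa(e)$, so no work is needed beyond substitution.

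For the bracket condition, I would expand the right-hand side using the formula \eqref{PoissonTMR} with $\alpha_i := -(\nabla\mu)^*(e_i)$ and $f_i := \mu^*(e_i)$, and then decompose the resulting element of $T^*M \oplus \bR$ into its $\Omega^1(M)$-part and its $C^\infty(M)$-part, matching each against the corresponding component of $\phi([e_1, e_2]_A) = -(\nabla\mu)^*([e_1, e_2]_A) + \mu^*([e_1, e_2]_A)$.

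The 1-form part of the expansion collapses, after using bilinearity of the Koszul bracket, to $[(\nabla\mu)^*(e_1), (\nabla\mu)^*(e_2)]_\pi$, so the requirement is exactly the identity \eqref{identity34}, i.e., $-(\nabla\mu)^*([e_1, e_2]_A) = [(\nabla\mu)^*(e_1), (\nabla\mu)^*(e_2)]_\pi$. This is precisely the content already established in Section \ref{sec3} under the basic-curvature hypothesis $\langle {}^A S, \mu\rangle = 0$ (via Lemma \ref{basicms2} and the local computation that ran through equations \eqref{identity22}--\eqref{identity34}), and was restated in Proposition \ref{nablamuLAmorphism}. This step is the real content of the proposition, and it is also the main obstacle --- everything non-trivial is packaged into Section \ref{sec3}.

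For the function part, I would substitute $\pi^\sharp((\nabla\mu)^*(e_i)) = -\rhoa(e_i)$ from (PC2) into the terms $-\pi^\sharp(\alpha_1) f_2 + \pi^\sharp(\alpha_2) f_1 - \pi(\alpha_1, \alpha_2)$ produced by \eqref{PoissonTMR}, obtaining a combination of $\rhoa(e_i)\mu^*(e_j)$ and $\pi((\nabla\mu)^*(e_1), (\nabla\mu)^*(e_2))$. Then I would use the defining formula of the Lie algebroid differential ${}^A \rd$ on the $1$-form $\mu$ to re-express $\mu^*([e_1,e_2]_A)$ in terms of $\rhoa(e_i)\mu^*(e_j)$ and $\bigl({}^A\rd\mu\bigr)(e_1,e_2)$. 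The remaining scalar identity then reduces directly to (PC3), which substitutes $\bigl({}^A\rd\mu\bigr)(e_1,e_2)$ by $-\pi((\nabla\mu)^*(e_1),(\nabla\mu)^*(e_2))$. With both components verified, $\phi$ is a Lie algebroid morphism.
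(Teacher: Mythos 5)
Your proposal follows essentially the same route as the paper's proof: the anchor condition is read off from (PC2), the $\Omega^1(M)$-component of the bracket identity is reduced to equation~\eqref{identity34} (which is exactly where the hypothesis $\bigl\langle {}^A S,\mu\bigr\rangle=0$ enters, via the Section~\ref{sec3} computation), and the $C^{\infty}(M)$-component is verified from (PC2), (PC3) and the definition of ${}^A\rd\mu$. You also correctly identify that all the non-trivial content is concentrated in \eqref{identity34}; no gaps.
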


\begin{proof}
We prove that the equation
\begin{gather}
[(-(\nabla \mu)^* + \mu^*)(e_1), (-(\nabla \mu)^* + \mu^*)(e_2)]_{T^*M \oplus \bR} =
(-(\nabla \mu)^* + \mu^*)([e_1, e_2]_A)
\label{PoisLAmorphsm01}
\end{gather}
is satisfied under the Lie bracket \eqref{PoissonTMR} in $T^*M \oplus \bR$
for every $e_1, e_2 \in \Gamma(A)$.

If $\bigl\langle {}^A S,\mu\bigr\rangle = 0$ is imposed, equation~\eqref{identity34} is satisfied.
The left-hand side of the $T^*M$ part of equation~\eqref{PoisLAmorphsm01} is
$
 [-(\nabla \mu)^*(e_1), -(\nabla \mu)^*(e_2)]_{\pi}
$.
Using equation~\eqref{identity34}, it is equal to
$-(\nabla \mu)^*([e_1, e_2])$, which is the $T^*M$ part in the right-hand side of equation~\eqref{PoisLAmorphsm01}.

Using \eqref{PMS2}, the $\bR$ part of equation~\eqref{PoisLAmorphsm01} is
\[
\calL_{\rhoa(e_1)} (\mu(e_2)) - \calL_{\rhoa(e_2)} (\mu(e_1))
+ \pi((\nabla \mu)^*(e_1), (\nabla \mu)^*(e_2)).
\]
It is equal to $\mu^*([e_1, e_2)]$ from (P2) and equation~\eqref{PMS3}.

The morphism for the anchor maps is
\[
\rhoa(e) = \rhotr \circ (-(\nabla \mu)^* + \mu^*)(e),
\]
which is equivalent to the condition (PC2).
Equations~\eqref{PoisLAmorphsm01} and \eqref{PoisLAmorphsm01} show that
$-(\nabla \mu)^* + \mu^*$ is the Lie algebroid morphism
$A \rightarrow T^*M \oplus \bR$.
\end{proof}

Next, a Lie algebroid morphism
corresponding to Proposition \ref{SympS2} is introduced.

We consider the map $\rhoa - (\nabla \mu)^*\colon A \rightarrow TM \oplus T^*M$.
Here $TM \oplus T^*M$ is the standard Courant algebroid
with $H=0$ in Example \ref{standardCA}.

\begin{Proposition}\label{PoisP2}
Let $\mu$ be a comomentum section satisfying $\rm (PC2)$ and $\rm (PC3)$.
Assume $\bigl\langle {}^A S,\mu\bigr\rangle = 0$.
Then $\rhoa - (\nabla \mu)^*$ is a Lie algebroid morphism
from $A$ to $L_{\pi}$.
\end{Proposition}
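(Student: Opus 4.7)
My strategy mirrors the pre-symplectic argument of Proposition~\ref{SympS2}, with $L_\omega$ replaced by the Dirac structure $L_\pi$ and the Lie bracket on $\mathfrak{X}(M)$ replaced by the Koszul bracket $[-,-]_\pi$ on $\Omega^1(M)$. I would verify, in order: (a) the image of $\rhoa-(\nabla\mu)^*$ actually lies in $L_\pi$; (b) compatibility with anchors; (c) compatibility with brackets under the Dorfman bracket on the standard Courant algebroid.

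For step (a), for each $e\in\Gamma(A)$ I set $\alpha_e:=-(\nabla\mu)^*(e)\in\Omega^1(M)$ and check that $\rhoa(e)-(\nabla\mu)^*(e)$ has the Dirac form $-\pi^\sharp(\alpha_e)+\alpha_e$; this is precisely the content of condition (PC2), equation~\eqref{PCMS2}. Step (b) is then automatic: the anchor of $L_\pi$ inherited from the standard Courant algebroid is the restriction of $\rhott$, and its value on $\rhoa(e)-(\nabla\mu)^*(e)$ is simply $\rhoa(e)$.

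Step (c) is the substantive calculation. I would expand the Dorfman bracket of Example~\ref{standardCA} with $H=0$,
\[
\courant{\rhoa(e_1)-(\nabla\mu)^*(e_1)}{\rhoa(e_2)-(\nabla\mu)^*(e_2)},
\]
and compare it with $\rhoa([e_1,e_2]_A)-(\nabla\mu)^*([e_1,e_2]_A)$. The $TM$-component yields $[\rhoa(e_1),\rhoa(e_2)]=\rhoa([e_1,e_2]_A)$ because $\rhoa$ is already a Lie algebroid morphism. The $T^*M$-component produces $-\calL_{\rhoa(e_1)}(\nabla\mu)^*(e_2)+\iota_{\rhoa(e_2)}\rd(\nabla\mu)^*(e_1)$; after substituting $\rhoa(e_i)=-\pi^\sharp((\nabla\mu)^*(e_i))$ from (PC2), this expression is recognized, via the definition~\eqref{Koszulbracket} of the Koszul bracket, as $-[(\nabla\mu)^*(e_1),(\nabla\mu)^*(e_2)]_\pi$. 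Invoking equation~\eqref{identity34}, which is available precisely because of the hypothesis $\langle {}^A S,\mu\rangle=0$, this equals $-(\nabla\mu)^*([e_1,e_2]_A)$, matching the $T^*M$-part of $(\rhoa-(\nabla\mu)^*)([e_1,e_2]_A)$.

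The principal obstacle is the sign- and Lie-derivative bookkeeping in step (c): making the reduction from the Dorfman bracket to the Koszul bracket explicit enough that the already-established identity~\eqref{identity34} (equivalently the covariant form~\eqref{identity22}) can be applied directly. Once that reduction is done cleanly, the Lie algebroid morphism conditions are verified, and the proof becomes essentially the Poisson transcription of the pre-symplectic calculation in Proposition~\ref{SympS2}.
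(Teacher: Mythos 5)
Your proposal is correct and follows essentially the same route as the paper's proof: membership in $L_{\pi}$ via (PC2), the trivial anchor check, and reduction of the Dorfman bracket to the Koszul bracket so that equation~\eqref{identity34} (available from $\bigl\langle {}^A S,\mu\bigr\rangle=0$) closes the bracket condition. The only cosmetic difference is that the paper verifies isotropy by computing the pairing explicitly rather than observing the graph form directly, and both share the same sign-bookkeeping caveats you already flag.
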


\begin{proof}
From the condition \eqref{PCMS2}, the inner product is
\begin{gather}
 \bracket{(\rhoa - (\nabla \mu)^*)(e_1)}{(\rhoa - (\nabla \mu)^*)(e_2)}
\nonumber \\
\qquad= - \pi((\nabla \mu)^*(e_1), (\nabla \mu)^*(e_2))
- \pi((\nabla \mu)^*(e_2), (\nabla \mu)^*(e_1)) =0.
\label{Poisinner}
\end{gather}
Using (PC2) and the $\pi$ is Poisson, the Dorfman bracket is
\begin{gather}
[(\rhoa - (\nabla \mu)^*)(e_1), (\rhoa - (\nabla \mu)^*)(e_2)]_D\nonumber\\
\qquad= \pi^{\sharp}[(\nabla \mu)^*(e_1), (\nabla \mu)^*(e_2)]_{\pi}
+ [(\nabla \mu)^*(e_1), (\nabla \mu)^*(e_2)]_{\pi},
\label{PoisLie21}
\end{gather}
Using equation~\eqref{identity34}, this equation~\eqref{PoisLie21}
becomes
\begin{gather}
= - \pi^{\sharp} (\nabla \mu)^*([e_1, e_2]_A)
- (\nabla \mu)^*([e_1, e_2]_A)
= (\rho - (\nabla \mu)^*)([e_1, e_2]_A),
\label{PoisLie22}
\end{gather}
which gives
\begin{gather}
[(\rhoa - (\nabla \mu)^*)(e_1), (\rhoa - (\nabla \mu)^*)(e_2)]_D
= (\rho - (\nabla \mu)^*)([e_1, e_2]_A).
\label{PoisLie01}
\end{gather}
Equations~\eqref{Poisinner} and \eqref{PoisLie22} show that
$(\rhoa - (\nabla \mu)^*)(e)$ is an element of the Dirac structure
$L_{\pi}$.
Equation~\eqref{PoisLie01} means that $(\rhoa - (\nabla \mu)^*)$ is
a Lie algebroid morphism between $A$ and $L_{\pi}$.

The condition \eqref{LAmorphism02} for anchor maps are obvious.
Therefore, $\rho - (\nabla \mu)^*$ is a Lie algebroid morphism
from $A$ to $L_{\pi}$.
\end{proof}

We combine two Propositions \ref{PoisP3} and \ref{PoisP2}.
\begin{Theorem}
Let $(M, \pi)$ be a Poisson manifold and $(A, \rhoa, [-,-]_A)$ be a Lie algebroid over~$M$.
Let $\mu^*\colon \Gamma(A) \rightarrow C^{\infty}(M)$ be a bracket-compatible comomentum section. If $\bigl\langle {}^A S,\mu\bigr\rangle = 0$,
$-(\nabla \mu)^* + \mu^*$ is a Lie algebroid morphism from
$A$ to $T^*M \oplus \bR$ and
$\rhoa - (\nabla \mu)^*$ is a Lie algebroid morphism
from $A$ to $L_{\pi}$.
\end{Theorem}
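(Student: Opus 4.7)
The plan is to observe that this theorem is essentially a bookkeeping combination of Propositions \ref{PoisP3} and \ref{PoisP2}, both of which have just been established under the exact hypotheses in force here: $(M,\pi)$ is a Poisson manifold, $A$ is a Lie algebroid over $M$, $\mu^{*}$ is bracket-compatible in the sense of Definition \ref{comomPois} (equations \eqref{PCMS2} and \eqref{PCMS3}), and the basic curvature pairing $\bigl\langle {}^{A}S,\mu\bigr\rangle=0$ holds. Hence no new geometric input is required; one only needs to verify that the hypotheses of each proposition are met by the data of the theorem, and that the two conclusions can be asserted simultaneously for the same object $\mu^{*}$.

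Concretely, I would proceed in two short steps. First, invoke Proposition \ref{PoisP3}: since $\mu^{*}$ satisfies (PC2) and (PC3) and since $\bigl\langle {}^{A}S,\mu\bigr\rangle=0$ implies the coordinate identity \eqref{identity34} (equivalently equation \eqref{identity22}), the bracket compatibility $[-(\nabla\mu)^{*}(e_{1})+\mu^{*}(e_{1}),-(\nabla\mu)^{*}(e_{2})+\mu^{*}(e_{2})]_{T^{*}M\oplus\bR}=(-(\nabla\mu)^{*}+\mu^{*})([e_{1},e_{2}]_{A})$ holds, and the anchor compatibility $\rhotr\circ(-(\nabla\mu)^{*}+\mu^{*})=\rhoa$ is precisely the condition (PC2). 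This yields the first Lie algebroid morphism to $T^{*}M\oplus\bR$. Second, invoke Proposition \ref{PoisP2} with the same data: the isotropy calculation \eqref{Poisinner} uses only (PC2), the bracket closure computation uses \eqref{identity34} (again from $\bigl\langle{}^{A}S,\mu\bigr\rangle=0$) together with the fact that $\pi$ is Poisson, and the anchor condition is again (PC2). This gives the second Lie algebroid morphism, with image landing in $L_{\pi}\subset TM\oplus T^{*}M$.

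There is essentially no obstacle: the content of the theorem is that the bracket-compatible comomentum section is the common source of two Lie algebroid morphisms, one into the Almeida--Molino-type Lie algebroid $T^{*}M\oplus\bR$ and one into the Dirac structure $L_{\pi}$, and both are already established. If one wanted a self-contained proof rather than a citation of the two propositions, the only nontrivial step would be the passage from the vanishing of $\bigl\langle{}^{A}S,\mu\bigr\rangle$ to the identity \eqref{identity34}, which is the Koszul-bracket reformulation of equation \eqref{identity22} derived in Lemma \ref{basicms} and its coordinate manipulation; that is the place where the Poisson condition $[\pi,\pi]_{S}=0$ and the explicit form of the basic curvature \eqref{lcbasiccurv} are both used. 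Everything else is algebraic rearrangement already carried out in the proofs of Propositions \ref{PoisP3} and \ref{PoisP2}.
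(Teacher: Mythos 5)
Your proposal is correct and matches the paper exactly: the paper gives no separate proof of this theorem beyond the sentence ``We combine two Propositions \ref{PoisP3} and \ref{PoisP2},'' which is precisely the two-step citation you carry out. Your additional remark that the only genuinely nontrivial input is the derivation of equation \eqref{identity34} from $\bigl\langle{}^A S,\mu\bigr\rangle=0$ is an accurate reading of where the real work was done earlier in the paper.
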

Under the condition $\bigl\langle {}^A S,\mu\bigr\rangle = 0$,
the reverse is proved, i.e.,
if $-(\nabla \mu)^* + \mu^*$ is a Lie algebroid morphism from
$A$ to $T^*M \oplus \bR$ and
$\rhoa - (\nabla \mu)^*$ is a Lie algebroid morphism
from $A$ to $L_{\pi}$,
$\mu^*\colon \Gamma(A) \rightarrow C^{\infty}(M)$ be a bracket-compatible comomentum section.

\section{Momentum sections as Poisson maps}\label{sec5}
In this section, we construct a Poisson map
by considering a map from $T^*M \oplus \bR$ to $A^*$ induced from a momentum section $\mu$.
It is an improvement of a ``bivector map'' in the paper
\cite{Blohmann:2023} to a~Poisson map in some sense.
In this section, we concentrate on a Hamiltonian Lie algebroid over a Poisson manifold. Obviously, a Hamiltonian Lie algebroid over a symplectic manifold is a~special case.

\begin{Definition}[Poisson map]
Let $(M_1, \pi_1)$ and $(M_2, \pi_2)$ be Poisson manifolds.
A smooth map~$\psi$ is a \textit{Poisson map} if
$\psi^*\colon C^{\infty}(M_2) \rightarrow C^{\infty}(M_1)$ satisfies
\[
\psi^* \sbv{f}{g}_2 = \sbv{\psi^* f}{\psi^* g}_1
\]
for $f, g \in C^{\infty}(M_2)$, where $\sbv{-}{-}_1$
and $\sbv{-}{-}_2$ are Poisson brackets induced from $\pi_1$ and $\pi_2$.
\end{Definition}
The condition of a Poisson map is equivalent to the condition for Poisson bivector fields,
$
\psi_* \pi_1 = \pi_2$.

As explained in introduction, a momentum map $\mu\colon M \rightarrow \mathfrak{g}^*$
is a Poisson map, where a~Poisson structure on $M$ is induced from the symplectic structure, or $M$ is a Poisson manifold. A Poisson structure on $\mathfrak{g}^*$ is the KKS Poisson structure.
We generalize it to the momentum section.

For construction of a Poisson map,
we can use the following fundamental facts.
For instance, one can refer to \cite[Theorem 13.70]{CrainicFernandesMurcut} or
 \cite[Proposition 2.13]{Meinrenken02}.
The first one is that
if $A$ is a Lie algebroid, $A^*$ is a Poisson manifold
with a (fiberwise linear) Poisson structure.
The second one is as follows.
\begin{Proposition}\label{dualPoisson}
Let $(A_1, M_1)$ and $(A_2, M_2)$ be two Lie algebroids.
A vector bundle morphism~${\phi\colon A_1 \rightarrow A_2}$ is Lie algebroid morphism
if and only if the dual map $\psi\colon A_2^* \rightarrow A_1^*$ is a~Poisson map.
\end{Proposition}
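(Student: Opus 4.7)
The proof hinges on the explicit form of the canonical (fiberwise-linear) Poisson structure on the dual of a Lie algebroid and on encoding it through a small generating set of functions. For a Lie algebroid $(A,[-,-],\rhoa)$ over $M$ with bundle projection $p\colon A^{*}\to M$, the Poisson bracket on $C^{\infty}(A^{*})$ is uniquely determined by the three relations
\begin{gather*}
\sbv{\ell_{e}}{\ell_{e'}}_{A^{*}} = \ell_{[e,e']}, \qquad
\sbv{\ell_{e}}{p^{*}f}_{A^{*}} = p^{*}(\rhoa(e)f), \qquad
\sbv{p^{*}f}{p^{*}g}_{A^{*}} = 0,
\end{gather*}
together with the Leibniz rule, where $\ell_{e}(\xi)=\bracket{\xi}{e}$ is the fiberwise-linear function associated with $e\in\Gamma(A)$ and $f,g\in C^{\infty}(M)$. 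My first step is to reduce the Poisson-map condition to a check on this distinguished generating set.

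In the relevant case $M_{1}=M_{2}=M$ with $\phi\colon A_{1}\to A_{2}$ covering $\mathrm{id}_{M}$, the dual map $\psi=\phi^{*}\colon A_{2}^{*}\to A_{1}^{*}$ also covers the identity, and a direct computation gives $\psi^{*}\ell_{e}=\ell_{\phi(e)}$ for $e\in\Gamma(A_{1})$ and $\psi^{*}p_{1}^{*}f=p_{2}^{*}f$. Evaluating the Poisson-map condition $\psi^{*}\sbv{-}{-}_{A_{1}^{*}}=\sbv{\psi^{*}-}{\psi^{*}-}_{A_{2}^{*}}$ on each pair of generators produces: (i) $\ell_{\phi([e,e']_{1})}=\ell_{[\phi(e),\phi(e')]_{2}}$, equivalent to $\phi([e,e']_{1})=[\phi(e),\phi(e')]_{2}$; (ii) $p_{2}^{*}(\rhoa_{1}(e)f)=p_{2}^{*}(\rhoa_{2}(\phi(e))f)$ for all $f$, equivalent to $\rhoa_{2}\circ\phi=\rhoa_{1}$; and (iii) a trivial identity $0=0$ for two pullbacks. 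These are exactly the two defining conditions of a Lie algebroid morphism, so each direction of the claimed equivalence is obtained from the same computation applied in opposite order.

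To finish, one propagates the equality from the generators to all of $C^{\infty}(A^{*})$. This rests on the facts that fiberwise-polynomial functions (products of the $\ell_{e}$ and $p^{*}f$) are dense in $C^{\infty}(A^{*})$ in the relevant topology, and that both $\psi^{*}$ and the Poisson bracket are (bi)derivations, so matching on generators forces matching everywhere. The step I expect to require the most care is this density/extension argument; in the genuinely different-base case $M_{1}\neq M_{2}$ stated in the proposition, one must also reinterpret the ``dual map'' appropriately (as a comorphism of Lie algebroids, or equivalently by requiring the graph of $\phi$ in $A_{1}\times A_{2}$ to be a Lie subalgebroid), but the underlying bracket-on-generators calculation is the same.
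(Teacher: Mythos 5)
Your proposal is correct, but note that the paper does not actually prove this proposition: it is stated as a known fact with references to Crainic--Fernandes--M\u{a}rcu\c{t} (Theorem 13.70) and Meinrenken (Proposition 2.13), so there is no in-paper argument to compare against. What you have written is essentially the standard textbook proof from those sources, and the core of it is sound: the fiberwise-linear Poisson bracket on $A^*$ is determined by its values on the linear functions $\ell_e$ and the pullbacks $p^*f$, the identities $\psi^*\ell_e=\ell_{\phi(e)}$ and $\psi^*p_1^*f=p_2^*f$ hold when $\phi$ covers the identity, and evaluating the Poisson-map condition on the three types of generator pairs reproduces exactly the two defining conditions of a Lie algebroid morphism. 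Two small refinements are worth making. First, the ``density'' step can be avoided entirely: since a Poisson map is characterized by $\psi_*\pi_{A_2^*}=\pi_{A_1^*}$, and a bivector is determined pointwise by its pairings with $\rd F\wedge \rd G$ for $F$, $G$ ranging over any family of functions whose differentials span the cotangent space at each point, it suffices that the $\rd\ell_e$ and $\rd(p^*f)$ span $T^*_\xi A^*$ at every $\xi$ --- no topology or approximation is needed. Second, you are right to flag the different-base case: the paper's statement with $M_1\neq M_2$ requires interpreting the ``dual map'' as a comorphism, and this is in fact the setting of the cited references; in the paper's actual application (Proposition 5.4 and Theorem 5.7) everything is over a single base, where your computation applies verbatim.
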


Applying Proposition \ref{dualPoisson} to the map
$-(\nabla \mu)^*$ in Proposition \ref{nablamuLAmorphism},
we obtain the following proposition.

\begin{Proposition}\label{nablamuPoisson}
Let $(M, \pi, A, \mu)$ be a Hamiltonian Lie algebroid over a Poisson manifold.
Then if $\bigl\langle{}^A S,\mu\bigr\rangle =0$,
$-\nabla \mu\colon TM \rightarrow A^*$ is a Poisson map.
\end{Proposition}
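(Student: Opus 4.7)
The plan is to reduce the statement to a direct application of Proposition \ref{dualPoisson} combined with Proposition \ref{nablamuLAmorphism}. The key observation is that $-\nabla\mu\colon TM\to A^*$, viewed as a bundle morphism, is precisely the dual of the Lie algebroid morphism $-(\nabla\mu)^*\colon A\to T^*M$ obtained in the previous section.

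First I would clarify the two Lie algebroid structures in play. The bundle $A$ carries its given Lie algebroid $(A,\rho_A,[-,-]_A)$. The cotangent bundle $T^*M$ of the Poisson manifold $(M,\pi)$ carries the cotangent Lie algebroid of Example \ref{Poisson}, with anchor $-\pi^{\sharp}$ and Koszul bracket $[-,-]_\pi$. The duals $A^*$ and $(T^*M)^*=TM$ then acquire their canonical fiberwise-linear Poisson structures, the structure on $A^*$ being $\pi_{A^*}$ and the structure on $TM$ being the tangent lift of $\pi$ (the linear Poisson structure dual to the cotangent Lie algebroid).

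Second I would identify $-\nabla\mu\colon TM\to A^*$ as a vector bundle morphism dual to $-(\nabla\mu)^*\colon A\to T^*M$. Since $\nabla\mu\in\Omega^1(M,A^*)=\Gamma(T^*M\otimes A^*)$, it defines simultaneously a map $TM\to A^*$ and, by transposition along the pairing of $A$ with $A^*$, a map $A\to T^*M$; these two maps are dual bundle morphisms, and the minus sign preserves this duality.

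Third I would invoke Proposition \ref{nablamuLAmorphism}, whose hypothesis $\bigl\langle{}^A S,\mu\bigr\rangle=0$ we are assuming, to conclude that $-(\nabla\mu)^*\colon A\to T^*M$ is a Lie algebroid morphism. By Proposition \ref{dualPoisson}, its dual $-\nabla\mu\colon TM\to A^*$ is a Poisson map between the fiberwise-linear Poisson manifolds $TM$ and $A^*$, which is what we want.

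The only real subtlety is bookkeeping: matching the sign conventions (in particular the minus sign in $\rho=-\pi^{\sharp}$), checking that the Poisson structure induced on $TM$ by the cotangent Lie algebroid coincides with the one intended in the statement, and verifying that the duality of $-\nabla\mu$ and $-(\nabla\mu)^*$ is exactly the vector bundle duality needed in Proposition \ref{dualPoisson}. Once these conventions are pinned down, the proof is essentially one line: it is the composition of Propositions \ref{nablamuLAmorphism} and \ref{dualPoisson}.
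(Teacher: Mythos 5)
Your proposal is correct and follows exactly the paper's own argument: the paper likewise obtains this proposition by applying Proposition \ref{dualPoisson} to the Lie algebroid morphism $-(\nabla\mu)^*\colon A\to T^*M$ of Proposition \ref{nablamuLAmorphism}, with $-\nabla\mu\colon TM\to A^*$ as its dual. Your extra care about the sign conventions and the identification of the induced Poisson structures is consistent with the conventions the paper uses.
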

However, since the map is not based on $\mu$ but $\nabla \mu$,
this Poisson map is not what we want.

\subsection{Dg manifolds for Lie algebroids and graded Poisson manifolds}

Let $(M, \pi)$ be a Poisson manifold.
We generalize the Lie bracket \eqref{PoissonTMR} on
$\Gamma(T^*M \oplus \bR)$ introduced in Section \ref{PoisLAM}
to a Poisson bracket.
For it, we consider ``a space of functions on $T^*M \oplus \bR$''.
Similarly we also consider ``a space of functions on $A^*$''
and a Poisson bracket on the space.
In other words, one can see $T^*M \oplus \bR$ and $A^*$
as graded manifolds.
Differential graded manifolds (dg manifolds), which are also called Q-manifolds
are graded manifolds with a graded vector field~$Q$ such that $Q^2=0$.
Dg manifolds corresponding to these spaces are introduced and (graded) Poisson brackets are defined on spaces of functions on these graded manifolds.
For reviews of graded manifolds, One can refer to \cite{Cattaneo:2010re, Ikeda:2012pv,
Roytenberg:2006qz} and references therein.

At first, we consider a graded manifold for $T^*M \oplus \bR$.
We introduce the graded manifold~${\calM = T^*[2](T[1]M \oplus \bR[1])}$,
where $[1]$ means that degree of coordinates is shifted by one.
Take local coordinates on $T[1]M \oplus \bR[1]$,
$\bigl(x^i, \eta^i, s\bigr)$ of degree $(0, 1, 1)$, where
$x^i$ is a local coordinate on $M$,
$\eta^i$ is the fiber coordinate on $T[1]M$,
and $s$ is one on $\bR[1]$.
Moreover, take fiber coordinates on
$T^*[2](T[1]M \oplus \bR[1])$, $(\xi_i, y_i, t)$ of degree
$(2, 1, 1)$
corresponds to a $1$-form $a_i(x) \rd x^i \in \Omega^1(M)$.
Here $C_k^{\infty}(\calM)$ is a space of degree $k$ functions on a graded
manifold~$\calN$.
A section of $T^*M \oplus \bR$,
$\alpha + f \in \Omega^1(M) \oplus C^{\infty}(M)$ is identified to
a degree one function
$\ualpha + f s \in C^{\infty}(T[1]M \oplus \bR[1])$.
The space of functions $C^{\infty}(T[1]M \oplus \bR[1])$ as an extension of
$\Omega^1(M) \oplus C^{\infty}(M)$, i.e.,
$\Omega^1(M) \oplus C^{\infty}(M)$ is a subset of
$C^{\infty}(T[1]M \oplus \bR[1])$.
Thus, \[\Omega^1(M) \oplus C^{\infty}(M) \simeq
C_1^{\infty}(T[1]M \oplus \bR[1]) \subset C^{\infty}(T[1]M \oplus \bR[1]).\]

If $M$ is a Poisson manifold, $T^*[1]M \oplus \bR[1]$ is a dg manifold
(a Q-manifold).
Because a~homological vector field is constructed
from the Poisson structure on $M$ as
\[
Q =\pi^{ij}(x) y_j \frac{\partial}{\partial x^i}
+ \frac{1}{2} \partial_k \pi^{ij}(x) y_i y_j \frac{\partial}{\partial y_k}
+ \frac{1}{2} \pi^{ij}(x) y_i y_j \frac{\partial}{\partial t}.
\]
$Q^2=0$ if $\pi = \frac{1}{2} \pi^{ij}(x) \tfrac{\partial}{\partial x^i}
\wedge \tfrac{\partial}{\partial x^j} \in \Gamma\bigl(\wedge^2 TM\bigr)$
is a Poisson bivector field.

$\calM$ has a dg symplectic structure\footnote{A dg symplectic structure is also called a QP-structure.} as follows.
Since $\calM$ is a cotangent bundle, there exist the following canonical
graded symplectic form of degree two,
\begin{gather}
\omegam = \delta x^i \wedge \delta \xi_i +
\delta \eta^i \wedge \delta y_i + \delta s \wedge \delta t.
\label{TMRsymp}
\end{gather}
We consider the following function of degree $3$:
\begin{align}
\Thetam &=
\pi(\xi, y) - \iota_y (\rd \pi)(y, y) + \pi(y, y)s
\nonumber \\
&=
\pi^{ij}(x) \xi_i y_j - \frac{1}{2} \partial_i \pi^{jk}(x) y_j y_k \eta^i
+ \frac{1}{2} \pi^{jk}(x) y_j y_k s,
\label{homologicalM}
\end{align}
where
$\pi = \frac{1}{2} \pi^{ij}(x) \tfrac{\partial}{\partial x^i}
\wedge \tfrac{\partial}{\partial x^j}$
is a Poisson bivector field on $M$.
The (graded) Poisson bracket of degree $-2$,
$\sbv{-}{-}_{\calM}$, is induced from the symplectic form
\eqref{TMRsymp} as usual.
$\Thetam$ satisfies $\sbv{\Thetam}{\Thetam}_{\calM}=0$ using
$\pi$ is a Poisson bivector field.
Thus we obtain a homological vector field
$Q_{\calM} = \sbv{\Thetam}{-}$ satisfying $Q_{\calM}^2=0$.
$(\omegam, \Thetam)$ define a dg symplectic structure
on $\calM$.
The homological function $\Thetam$ is regarded as
``a Poisson bivector field'' on $T[1]M \oplus \bR[1]$
and define a~(graded) Poisson bracket of degree one
on $T[1]M \oplus \bR[1]$ as follows.
In fact, the following derived bracket defines a bracket
\begin{gather}
\sbv{\uu}{\uv}_{T[1]M \oplus \bR[1]} :=
\sbv{\sbv{\uu}{\Thetam}_{\calM}}{\uv}_{\calM}
\label{gradedPoisson01}
\end{gather}
for $\uu, \uv \in C^{\infty}(T[1]M \oplus \bR[1])$.
We can easily prove that
$\sbv{-}{-}_{T[1]M \oplus \bR[1]}$ is graded skew symmetric,
and satisfies the Leibnitz rule and the Jacobi identity, thus, a (graded) Poisson bracket on~$C^{\infty}(T[1]M \oplus \bR[1])$.

Moreover, if
$\uu, \uv \in C_1^{\infty}(T[1]M \oplus \bR[1])$, i.e.,
$\uu = \ualpha + fs \simeq \alpha + f$ and
$\uv = \ubeta + g s \simeq \beta + g$,
the bracket \eqref{gradedPoisson01} is equivalent to
the Lie bracket \eqref{PoissonTMR},
\[
[\alpha + f, \beta + g]_{T^*M \oplus \bR}
=
\sbv{\ualpha + fs}{\ubeta + g s }_{T[1]M \oplus \bR[1]}.
\]
Thus, the graded manifold $T[1]M \oplus \bR[1]$ is a (graded)
Poisson manifold with the Poisson bracket~\eqref{gradedPoisson01}
including the Lie algebroid structure on $T^*M \oplus \bR$.

Next, we consider $A^*$.
For $\Gamma(A) \oplus C^{\infty}(M)$, a bilinear bracket is induced
from the Lie algebroid structure on $A$.
The bilinear bracket on $A^*$ is concretely defined by
\cite{Blohmann:2023}
\begin{gather}
\{f, g \}_{A^*} := 0,
\label{PoissonA01}
\qquad
\{a, g \}_{A^*} := \rhoa(a) g,
\\
\{a, b \}_{A^*} := [a, b]_A,
\label{PoissonA03}
\end{gather}
where $f, g \in C^{\infty}(M)$ and $a, b \in \Gamma(A)$.
Here an element of $a \in \Gamma(A)$ is regarded as a
linear function $a\colon \Gamma(A^*) \rightarrow \bR$
by the pairing of $A$ and $A^*$.
Since the bracket $\{-, -\}_{A^*}$ satisfies the Jacobi identity,
this fiberwise bracket is called a `Poisson bracket' on $A^*$
in literature, by regarding $a \in \Gamma(A)$ is a linear function
on $A^*$.

More generally, this `Poisson bracket'
is induced from a graded Poisson bracket
on the space of functions on a dg symplectic manifold.
For a vector bundle $A$ over $M$, $A[1]$ is a graded bundle,
of which fiber is shifted by one.
For a Lie algebroid $A$, the graded manifold $A[1]$ is
a~dg manifold~\cite{Vaintrob}.
Let $x^i$ be a local coordinate on the base manifold $M$,
$q^a$ be a fiber coordinate on~$A[1]$.
A~vector field of degree one on the shifted vector bundle
$A[1]$ is given by
\[
Q = \rhoa^i_a(x) q^q \frac{\partial}{\partial x^i}
- \frac{1}{2} C_{ab}^c(x) q^a q^b \frac{\partial}{\partial q^c}.
\]
Require that $A[1]$ is a dg manifold, i.e., $Q$ is a homological vector field
such that $Q^2 = \frac{1}{2} [Q, Q]\allowbreak =0$.
This condition gives a Lie algebroid structure on $A$.
Here the anchor map and the Lie bracket on $A$ are given by
$
\rhoa(e_a) := \rhoa^i_a(x) \partial_i$,
$
[e_a, e_b] := C_{ab}^c e_c$,
for the basis $e_a$ of $A$.

The corresponding dg symplectic manifold is constructed as follows.
We consider the graded cotangent bundle
$\calN = T^*[1](A[1] \oplus \bR[1]) \simeq T^*[1](A^*[1] \oplus \bR[1])$.
We take the canonical graded symplectic form on the cotangent bundle.
Then a Hamiltonian function
$\Thetan \in C^{\infty}(\calN)$ for $Q$ is defined by
$
\delta \Thetan = - \iota_Q \omega_{\calN}$,
where $\delta$ is the differential on the graded manifold $\calN$.
Since $Q^2=0$, $\Theta$ satisfies that
\begin{gather}
\sbv{\Thetan}{\Thetan}_{\calN} =0.
\label{cmq}
\end{gather}
Take local coordinates on $\calN = T^*[1](A^*[1] \oplus \bR[1]) $.
$\bigl(x^i, p_a, s\bigr)$ are local coordinates on $A^*[1] \oplus \bR[1]$ of degree $(0, 1, 1)$
and $(\xi_i, q^a, t)$ are the corresponding fiber coordinates
of degree $(2, 1, 1)$.
The graded symplectic form is
$
\omegac = \delta x^i \wedge \delta \xi_i + \delta p_a \wedge \delta q^a
+ \delta s \wedge \delta t$.
The local coordinate expression of the homological function $\Thetan$ is
\begin{gather}
\Thetan =
\rhoa^i_a(x) \xi_i q^a + \frac{1}{2} C_{ab}^c(x) q^a q^b p_c.
\label{homologicalN}
\end{gather}
Equation~\eqref{cmq} shows that $\Thetan$ is regarded as a
'Poisson bivector field' on $A^*[1] \oplus \bR[1]$.
A Poisson brackets on $A^*[1] \oplus \bR[1]$ is defined by the derived bracket
\begin{gather}
\{F, G \}_{A^*[1] \oplus \bR[1]} := - \sbv{\sbv{F}{\Theta}_{\calN}}{G}_{\calN}
\label{derivedbracket}
\end{gather}
for $F, G \in C^{\infty}(A^*[1] \oplus \bR[1])$.

A degree one function on $A^*[1]$, $\ua = a^a(x) p_a \in C_1^{\infty}(A^*[1])$
is identified to a~section $a = a^a(x) e_a \in \Gamma(A)$.
A degree one function on $M \times \bR[1]$ is $f s \in C_1^{\infty}(M \times \bR[1])$ is identified to a~function $f \in C^{\infty}(M)$.
For degree one functions,
$F = \ua + fs \simeq a + f$ and $G = \ub + gs \simeq b + g$,
the Poisson bracket \eqref{derivedbracket} coincides with
equations~\eqref{PoissonA01}--\eqref{PoissonA03},
$
[a + f, b + g]_{A}
= \{\ua + fs, \ub + gs \}_{A^*[1] \oplus \bR[1]}$,
i.e., $\sbv{a + f}{b + g}_{A^*}= \{\ua + f, \ub + g \}_{A^*[1] \oplus \bR[1]}$.
Therefore, the Poisson bracket for linear functions on $A^*$,
equations~\eqref{PoissonA01}--\eqref{PoissonA03}, is regarded as
the restriction of the Poisson bracket on $C^{\infty}({A^*[1] \oplus \bR[1]})$
equation~\eqref{derivedbracket}.

\subsection{Poisson maps between dg symplectic manifolds}\label{sec:Poissonmap}

In this section, we construct a Poisson map from
$(T[1]M \oplus \bR[1], \{-,-\}_{T[1]M \oplus \bR[1]})$
to $(A^*[1],\allowbreak \{-, -\}_{{A^*[1] \oplus \bR[1]}})$ induced from a momentum section $\mu$.
Obviously, it gives
a Poisson map between two ordinary manifolds, $(T^*M \oplus \bR, \{-,-\}_{T^*M \oplus \bR})$ to $(A^*, \{-, -\}_{A^*})$.

For $\mu \in \Gamma(A^*)$, the covariant derivative
$\nabla \mu \in \Omega^1(M, A^*)$
is a $1$-form taking a value on $A^*$, and
regarded as a map $\nabla \mu\colon T[1]M \rightarrow A^*[1]$.
Thus, we have the map $-\nabla \mu + \mu\colon T[1]M \oplus \bR[1]
\rightarrow A^*[1]$.

\begin{Theorem}\label{ThPoismap}
Let $(A, \pi, \nabla, \mu)$ be a Hamiltonian Lie algebroid over a Poisson manifold $M$.
If~${\bracket{\baS}{\mu} =0}$,
$-\nabla \mu + \mu\colon T[1]M \oplus \bR[1] \rightarrow A^*[1]$
is a Poisson map.
\end{Theorem}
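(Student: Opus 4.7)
The strategy is to combine Proposition~\ref{PoisP3} with Proposition~\ref{dualPoisson}, and then to argue that the resulting classical Poisson map lifts to a graded one. Under the hypothesis $\bracket{\baS}{\mu}=0$, Proposition~\ref{PoisP3} already gives that
\[
\phi := -(\nabla\mu)^{*} + \mu^{*}\colon A \longrightarrow T^{*}M \oplus \bR
\]
is a Lie algebroid morphism, where the target carries the structure defined by \eqref{PoissonTMR} and \eqref{Lieanchor02}. Proposition~\ref{dualPoisson} then implies that the dual bundle map $\phi^{*}\colon TM \oplus \bR \to A^{*}$ is a Poisson map between the ordinary Poisson manifolds $TM \oplus \bR$ and $A^{*}$, equipped with the fiberwise-linear Poisson brackets dual to their Lie algebroid structures. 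A short unwinding of the duality identifies $\phi^{*}(v + r) = -\nabla\mu(v) + r\mu$, which is precisely the map $-\nabla\mu + \mu$ of the theorem.

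To conclude, one promotes this ordinary Poisson map to the graded setting $T[1]M \oplus \bR[1] \to A^{*}[1]$. By construction, the graded brackets $\sbv{-}{-}_{T[1]M \oplus \bR[1]}$ and $\sbv{-}{-}_{A^{*}[1]}$ are the derived brackets from $\Thetam$ on $\calM$ and $\Thetan$ on $\calN$, and their restrictions to the degree-zero generators $C^{\infty}(M)$ are trivial while their restrictions to the degree-one generators (identified with $\Omega^{1}(M) \oplus C^{\infty}(M)$ and $\Gamma(A)$) reproduce the Lie algebroid brackets on $T^{*}M \oplus \bR$ and on $A$ respectively. Since $\psi = -\nabla\mu + \mu$ pulls back $x^{i} \mapsto x^{i}$ and $p_{a} \mapsto -(\nabla_{i}\mu_{a})\eta^{i} + \mu_{a} s$, verifying the graded Poisson map identity $\psi^{*}\sbv{F}{G}_{A^{*}[1]} = \sbv{\psi^{*}F}{\psi^{*}G}_{T[1]M \oplus \bR[1]}$ on the generating set $\{x^{i}, p_{a}\}$ and extending by graded Leibniz reduces it to the classical Poisson map property of $\phi^{*}$ already established.

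I expect the main obstacle to be exactly the same place it appears in Proposition~\ref{PoisP3}: the case $\sbv{p_{a}}{p_{b}}_{A^{*}[1]}$. Its $T^{*}M$-component requires the Koszul identity
\[
-[(\nabla\mu)(e_{1}), (\nabla\mu)(e_{2})]_{\pi} = (\nabla\mu)([e_{1}, e_{2}]),
\]
which is equation~\eqref{identity34}, and this is precisely what the basic curvature hypothesis $\bracket{\baS}{\mu} = 0$ buys us; its $\bR$-component then uses \eqref{PCMS3} together with \eqref{PCMS2}. Once these identities are in hand, the passage from the ordinary to the graded Poisson map requires no new input beyond the tautological compatibility between the derived brackets on $C^{\infty}(T[1]M \oplus \bR[1])$, $C^{\infty}(A^{*}[1])$ and the classical fiberwise-linear brackets on $TM \oplus \bR$, $A^{*}$.
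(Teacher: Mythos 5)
Your proposal is correct and follows essentially the same route as the paper: the paper's own ``simpler proof'' is exactly the combination of Proposition~\ref{PoisP3} (the Lie algebroid morphism $-(\nabla\mu)^*+\mu^*\colon A\to T^*M\oplus\bR$ under $\bracket{\baS}{\mu}=0$) with Proposition~\ref{dualPoisson}, and your identification $\phi^*(v+r)=-\nabla_v\mu+r\mu$ is the correct unwinding of the duality. Your additional discussion of the graded lift via checking the derived-bracket identity on the generators $x^i,p_a$ corresponds to the direct computation the paper sketches as its first proof, so nothing is missing.
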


\begin{proof}
The statement of theorem is that the following equation is satisfied:
\[
\sbv{(-(\nabla \mu)^* + \mu^*) F}{(-(\nabla \mu)^* + \mu^*)G}_{T[1]M \oplus \bR[1]} = (-(\nabla \mu)^* + \mu^*)(\sbv{F}{G}_{A^*[1]})
\]
for every $F, G \in C^{\infty}(A^*[1])$.

We can directly prove by calculating this equation
using the derived bracket construction of
Poisson brackets of two spaces $T^*[1](T[0]M \oplus \bR[0])$ and $T^*[2]A^*[1]$
with homological functions, \eqref{homologicalM}, \eqref{gradedPoisson01}, \eqref{homologicalN} and \eqref{derivedbracket}.
Here $F, G \in C^{\infty}(A^*[1])$ are arbitrary functions of $x^i$ and $p_a$,
$F= F(x, p)$ and $G= G(x, p)$.
$-(\nabla \mu)^* + \mu^*$ corresponds to the degree two element $-(\unablamu)^* + \umu^* =
(-\nabla_i \mu_a) \eta^i q^a + \mu_a s q^a$ on the dg manifold.
By direct calculations,
we can prove that if \smash{$\bracket{\baS}{\mu} =0$},
the following equation is satisfied
in the derived bracket:
\begin{align*}
\sbv{\sbv{\sbv{-\unablamu^* + \umu^*}{F}_{\calM}}{\Thetam}_{\calM}}
{\sbv{-\unablamu^* + \umu^*}{G}_{\calM}}_{\calM} &=
\sbv{-\unablamu^* + \umu^*}{\sbv{\sbv{F}{\Thetan}_{\calN}}{G}_{\calN}}_{\calN}.
\end{align*}

Another simpler proof is to use Proposition \ref{dualPoisson}
and the result in Section \ref{PoisLAM}.
$-\nabla \mu + \mu$ is nothing but the dual of
$-(\nabla \mu)^* + \mu^*$.
In Proposition \ref{PoisP3}, we proved that it
$-(\nabla \mu)^* + \mu^*$ is the Lie algebroid morphism
between $A$ to $T^*M \oplus \bR$.
Therefore, Theorem \ref{ThPoismap} is obtained that~${-\nabla \mu + \mu}$ is a Poisson map from
$TM \oplus \bR$ to $A^*$.
\end{proof}

\section{Momentum sections as Dirac morphisms}\label{sec:Dirac}
In previous sections, Lie algebroid morphisms and Poisson maps
of momentum sections have been realized in Dirac structures $L$.
This suggests a generalization of a Poisson map to a~morphism between
Dirac manifolds.
Moreover, we propose Hamiltonian Lie algebroids over Dirac structures.

We introduce a Dirac morphism \cite{AlekseevBursztynMeinrenken} between two Dirac structures. It is a generalization of a~Dirac map \cite{BursztynCrainic1, BursztynCrainic2} and is related to the morphism between Dirac structures with other names~\cite{BursztynIglesiasPonteSevera}.

Let $\varphi\colon M\to N$ be a smooth map between smooth manifolds $M$ and $N$.
We define a binary relation $\mathbb{T}\varphi$ from $\mathbb{T}M:=TM\oplus T^*M$ to $\mathbb{T}N:=TN\oplus T^*N$, denoted by
$
\mathbb{T}\varphi \colon\ \mathbb{T}M \dashrightarrow \mathbb{T}N$,
as follows:
two elements $u+\sigma \in T_mM\oplus T_m^*M$ and $v+\tau \in T_nN\oplus T^*_nN$ for some $m\in M$, $n\in N$ are said to be in the \textit{relation} $\mathbb{T}\varphi$
if
$
n=\varphi(m)$, $ v=(\mathrm{d}\varphi)_m(u)$ and $ \sigma = (\mathrm{d}\varphi)_m^*(\tau)$,
hold. Here, the comorphism $(\mathrm{d}\varphi)_m^*$ for each $m\in M$ defines a relation in the opposite direction, from $T_n^*N$ to~$T_m^*M$ with $n=\varphi(m)$.
We write them as $u+\sigma\mapstochar\dashrightarrow_{\mathbb{T}\varphi} v+\tau$. The relation $\mathbb{T}\varphi$ defines a subset in $\mathbb{T}M\times \mathbb{T}N$.
Note that $(\mathrm{d}\varphi)^*$ is a relation, not a map.

We say that sections $u+\alpha \in \Gamma(\mathbb{T}M),\,v+\beta\in \Gamma(\mathbb{T}N)$ are in the relation $\mathbb{T}\varphi$ which is denote by
$u+\alpha\mapstochar\dashrightarrow_{\mathbb{T}\varphi} v+\beta$ if elements $u_m+\alpha_m\in T_mM\oplus T_m^*M$ and $v_n+\beta_n\in T_nN\oplus T_n^*N$ are in the relation $\mathbb{T}\varphi$ at each $m\in M, n\in N$.

Let $(M,L_M)$ and $(N,L_N)$ be two Dirac manifolds. A Dirac morphism is defined as follows.
\begin{Definition}
Let $\varphi\colon M\to N$ be a smooth map. A Dirac morphism {\rm (}or a forward Dirac map{\rm )} is defined to be a binary relation
$
\mathbb{T}\varphi \colon (\mathbb{T}M,\,L_M)\dashrightarrow (\mathbb{T}N, L_N)
$
satisfying the following property{\rm :} for any $m\in M$ and $v+\beta\in (L_N)_{\varphi(m)}$, there exists a unique element $u+\alpha\in (L_M)_m$ such that
$u+\alpha\mapstochar\dashrightarrow_{\mathbb{T}\varphi} v+\beta$.
\end{Definition}

There exists the following relation of a Poisson map
with a Dirac morphism \cite{Meinrenken02}.
\begin{Proposition}\label{PoissonDirac}
Let $(M, \pi_M)$ and $(N, \pi_N)$ be Poisson manifolds.
Let $\mathrm{Gr}(\pi_M)$ be the graph of the map
$\pi_M\colon T^*M \rightarrow TM$.
Then $\varphi\colon M \rightarrow N$ is a Poisson map if and only if
$\bbT \varphi\colon (TM \oplus T^*M, \mathrm{Gr}(\pi_M))
\dashrightarrow (TN \oplus T^*N, \mathrm{Gr}(\pi_N))$
is a Dirac morphism.
\end{Proposition}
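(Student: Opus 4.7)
The plan is to translate the Dirac morphism condition into explicit pointwise compatibility equations by using the parametrization of the graphs of the Poisson bivectors, and to observe that those equations are exactly $\varphi_*\pi_M = \pi_N$. Concretely, at each $m \in M$ the Dirac structure $\mathrm{Gr}(\pi_M)_m$ is parametrized by covectors via $\alpha \mapsto -\pi_M^{\sharp}(\alpha) + \alpha$, and similarly for $\mathrm{Gr}(\pi_N)_{\varphi(m)}$ via $\beta \mapsto -\pi_N^{\sharp}(\beta) + \beta$. The pairs in the relation $\bbT\varphi$ are characterized by the two conditions $v = (\rd\varphi)_m(u)$ and $\alpha = (\rd\varphi)_m^{*}(\beta)$.

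For the direction $(\Rightarrow)$, assume $\varphi$ is Poisson, fix $m \in M$ and take $v + \beta = -\pi_N^{\sharp}(\beta) + \beta \in (L_N)_{\varphi(m)}$. Set $\alpha := (\rd\varphi)_m^{*}(\beta)$ and $u := -\pi_M^{\sharp}(\alpha)$, so that $u + \alpha \in (L_M)_m$. The covector side of the relation $\bbT\varphi$ is automatic from the definition of $\alpha$; the tangent side $(\rd\varphi)_m(u) = v$ becomes $(\rd\varphi)_m \pi_M^{\sharp}(\rd\varphi)_m^{*}\beta = \pi_N^{\sharp}\beta$, which is exactly the pointwise form of $\varphi_*\pi_M = \pi_N$. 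Uniqueness of the lift is immediate: any other $u'+\alpha' \in (L_M)_m$ relating to $v+\beta$ must satisfy $\alpha' = (\rd\varphi)_m^{*}\beta = \alpha$ and hence $u' = -\pi_M^{\sharp}(\alpha') = u$.

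For $(\Leftarrow)$, assume $\bbT\varphi$ is a Dirac morphism. Given any $\beta \in T^*_{\varphi(m)}N$, apply the morphism property to $-\pi_N^{\sharp}(\beta) + \beta \in (L_N)_{\varphi(m)}$ to obtain a unique $u + \alpha \in (L_M)_m$ in relation with it. The relation forces $\alpha = (\rd\varphi)_m^{*}\beta$ and $(\rd\varphi)_m(u) = -\pi_N^{\sharp}\beta$, while $u + \alpha \in (L_M)_m$ forces $u = -\pi_M^{\sharp}(\alpha) = -\pi_M^{\sharp}(\rd\varphi)_m^{*}\beta$. Combining these gives $(\rd\varphi)_m \pi_M^{\sharp}(\rd\varphi)_m^{*}\beta = \pi_N^{\sharp}\beta$ for every $\beta$ and every $m$, which is precisely $\varphi_*\pi_M = \pi_N$, so $\varphi$ is a Poisson map.

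The argument is essentially a bookkeeping exercise, so there is no deep obstacle; the only delicate points are (i) keeping track of signs in the parametrization $\alpha \mapsto -\pi^{\sharp}(\alpha) + \alpha$ of the graph, and (ii) remembering that although the comorphism $(\rd\varphi)^{*}$ is in general only a relation between cotangent bundles, at a single point $(\rd\varphi)_m^{*} \colon T^*_{\varphi(m)}N \to T^*_m M$ is an honest linear map, which is what makes the uniqueness of the lift in the Dirac morphism condition automatic on the covector side and reduces the whole statement to the compatibility of the bivectors under pushforward.
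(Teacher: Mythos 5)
Your proof is correct. Note that the paper does not actually prove this proposition; it is quoted from Meinrenken's lecture notes \cite{Meinrenken02} and used as a black box, so there is no in-paper argument to compare against. Your direct pointwise verification is the standard one and is complete: the key observations — that membership in $\mathrm{Gr}(\pi)$ pins down the tangent component as $u=-\pi^{\sharp}(\alpha)$, that the covector leg of the relation $\mathbb{T}\varphi$ pins down $\alpha=(\rd\varphi)_m^{*}\beta$ (an honest linear map pointwise even though the comorphism is only a relation globally), and that the remaining tangent-leg condition is exactly $(\rd\varphi)_m\,\pi_M^{\sharp}\,(\rd\varphi)_m^{*}=\pi_N^{\sharp}$ — are all handled properly, and the signs are consistent with the paper's convention $L_{\pi}=\bigl\{-\pi^{\sharp}(\alpha)+\alpha\bigr\}$ from Example 2.21 since the minus signs cancel on both sides of $(\rd\varphi)_m(u)=v$.
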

We apply Proposition \ref{PoissonDirac} to our settings.

Let $(M, \pi, A, \mu)$ be a Hamiltonian Lie algebroid over a Poisson manifold.
Remember results in Section \ref{sec:Poissonmap}.
From Theorem \ref{ThPoismap},
$-\nabla \mu + \mu\colon TM \oplus \bR \rightarrow A^*$
is a Poisson map.
We can take $M$ and $N$ in Proposition \ref{PoissonDirac}
as $TM \oplus \bR$ and $A^*$. Then we obtain the following proposition.
\begin{Proposition}\label{Diracmorphism}
Let $(M, \pi, A, \mu)$ be a Hamiltonian Lie algebroid over a Poisson manifold.
Then
$\bbT (-\nabla \mu + \mu)\colon (T(TM \oplus \bR) \oplus T^*(TM \oplus \bR),
\mathrm{Gr}(\pi_{TM \oplus \bR}))
\dashrightarrow (TA^* \oplus T^*A^*, \mathrm{Gr}(\pi_{A^*}))$
is a~Dirac morphism.
\end{Proposition}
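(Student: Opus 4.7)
The plan is an immediate chaining of the two preceding results. First, I would invoke Theorem \ref{ThPoismap} (under the standing hypothesis $\bracket{\baS}{\mu}=0$ that must also be in force here, as it was for every Dirac/Poisson statement in Section \ref{sec5}) to obtain that the smooth map
\[
\varphi := -\nabla\mu+\mu\colon (TM\oplus\bR,\pi_{TM\oplus\bR})\longrightarrow (A^*,\pi_{A^*})
\]
is a Poisson map, where the two Poisson structures are those arising by restricting the graded derived brackets \eqref{gradedPoisson01} and \eqref{derivedbracket} to the fiberwise-linear sectors. Second, I would apply Proposition \ref{PoissonDirac}, which is a precise \emph{if-and-only-if}: a smooth map between Poisson manifolds is Poisson exactly when its induced relation on the generalized tangent bundles carries the graph of one Poisson bivector to the graph of the other as a Dirac morphism. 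Feeding the Poisson map supplied by Theorem \ref{ThPoismap} into the forward implication of this equivalence yields exactly
\[
\bbT\varphi\colon \bigl(\bbT(TM\oplus\bR),\mathrm{Gr}(\pi_{TM\oplus\bR})\bigr)\dashrightarrow \bigl(\bbT A^*,\mathrm{Gr}(\pi_{A^*})\bigr)
\]
as the desired Dirac morphism.

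There is essentially no genuine obstacle in this route. The only verification required is definitional: one must confirm that the Poisson structures $\pi_{TM\oplus\bR}$ and $\pi_{A^*}$ named in the statement of Proposition \ref{Diracmorphism} are the same ones underlying Theorem \ref{ThPoismap}. This is immediate from the derived-bracket construction, since the graded brackets on $T[1]M\oplus\bR[1]$ and on $A^*[1]\oplus\bR[1]$ restrict on the linear-in-fiber functions to the ordinary Poisson brackets used to form $\mathrm{Gr}(\pi_{TM\oplus\bR})$ and $\mathrm{Gr}(\pi_{A^*})$. All the geometric content has already been extracted in Lemma \ref{basicms} and Theorem \ref{ThPoismap}; the present proposition is a reinterpretation via the Poisson-map/Dirac-morphism dictionary of Proposition \ref{PoissonDirac}.

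The hard part, if one insisted on a self-contained proof that did not merely invoke Proposition \ref{PoissonDirac}, would be to verify directly the existence-and-uniqueness clause in the definition of a Dirac morphism, namely that for every $m$ and every $v+\beta\in\mathrm{Gr}(\pi_{A^*})_{\varphi(m)}$ there exists a unique $u+\alpha\in\mathrm{Gr}(\pi_{TM\oplus\bR})_m$ related to $v+\beta$ by $\bbT\varphi$. This existence forces $u=\pi_{TM\oplus\bR}^{\sharp}(\alpha)$ with $\alpha=(\mathrm{d}\varphi)^{*}(\beta)$ and the required compatibility $v=(\mathrm{d}\varphi)(u)=\pi_{A^*}^{\sharp}(\beta)$ is precisely $\varphi_*\pi_{TM\oplus\bR}=\pi_{A^*}$, i.e.\ the Poisson-map condition already established in Theorem \ref{ThPoismap}. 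So under the chosen strategy no new computation is required, and the proof reduces to citing the two results in sequence.
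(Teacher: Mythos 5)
Your proposal is correct and follows exactly the paper's own route: the text preceding Proposition \ref{Diracmorphism} derives it by citing Theorem \ref{ThPoismap} for the Poisson map $-\nabla\mu+\mu$ and then applying Proposition \ref{PoissonDirac} with $TM\oplus\bR$ and $A^*$ in the roles of $M$ and $N$. Your remark that the hypothesis $\bigl\langle{}^A S,\mu\bigr\rangle=0$ from Theorem \ref{ThPoismap} must be carried along (though omitted from the statement of Proposition \ref{Diracmorphism}) is a fair and accurate observation.
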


\section{Further discussions}
Possible applications and future directions are discussed in this section.

One of important applications of momentum maps is symplectic reductions \cite{MarsdenWeinstein, Meyer} or Poisson reductions \cite{MarsdenRatiu}.
For consistency of reductions, the momentum map must be a Poisson map from~$M$ to the dual of the Lie algebra $\mathfrak{g}^*$.
A momentum section is not necessarily a Poisson map from~$M$ to $A^*$
\cite{Blohmann:2023}.
In our paper, we have constructed a Poisson map from $T^*[1]M \oplus \bR[1]$
to $A^*$ from a momentum section. The map will give a consistent reduction.
One idea to make a~momentum section a Poisson map is to impose a condition compatible with a Poisson structure and a Lie algebroid $A$ \cite{Hirota:2023xqd}.
The idea to construct a Poisson map in this paper is another one.

Hamiltonian Lie algebroids appear in some physical models, constrained mechanics and sigma models \cite{Ikeda:2019pef}. Analysis of Hamiltonian Lie algebroids is directly connected to understanding these physical systems.
In particular, quantizations of these physical models will give a kind of `quantizations' of Hamiltonian Lie algebroids. Precise definition and meaning is not clear. It will give interesting insights to new relations to geometry and quantization.

\subsection*{Acknowledgements}
The authors would like to thank Antonio Miti and referees for useful discussion and comments.
Especially, they would like to thank to the anonymous referees for relevant contribution to improve the paper.
This work was supported by JSPS Grants-in-Aid for Scientific Research Number 22K03323.

\pdfbookmark[1]{References}{ref}
\LastPageEnding

\end{document}